\newtheorem{theorem}{Theorem}[section]
\newtheorem{lemma}{Lemma}[section]
\newtheorem{corollary}{Corollary}[section]
\newtheorem{prop}{Proposition}[section]
\newtheorem{definition}{Definition}[section]
\newtheorem{remark}{Remark}
\newtheorem{example}{Example}
\newenvironment{proof}{\noindent{\textsc{Proof.}}}
{$\hfill\Box$\vspace{0.1 cm}\\}
\newcommand{\R}{\mathbb R}
\newcommand{\fhi}{\varphi}
\newcommand{\abs}[1]{\left\vert #1 \right\vert}
\newcommand{\KK}{\mathcal{K}}
\newcommand{\Rr}{\mathcal{RS}}
\newcommand{\Rsol}{\mathcal{RS}}
\newcommand{\s}{\subseteq}
\newcommand{\entropia}{the entropy condition (E1) }
\newcommand{\f}[1]{f(\bar\rho_{#1})}
\newcommand{\ff}[1]{f(#1)}
\newcommand{\brho}{\bar\rho}
\newcommand{\OmegaH}{\Omega_{\mathcal H}}
\newcommand{\PhiH}{\Phi_{\mathcal H}}
\newcommand{\entropy}[1]{\mathcal F\left( #1 \right) \ge 0}
\newcommand{\srho}{\brho_1,\brho_2,\brho_3,\brho_4}
\DeclareMathOperator{\sgn}{sgn}
\begin{document}


\title{Entropy type conditions for Riemann solvers at nodes}

\author{Mauro Garavello\thanks{E-mail: \texttt{mauro.garavello@mfn.unipmn.it}.
    Partially supported by Dipartimento di Matematica e Applicazioni,
    Universit\`a di Milano-Bicocca.
  }\\
  Dipartimento di Scienze e Tecnologie Avanzate,\\
  Universit\`a del Piemonte Orientale ``A. Avogadro'',\\
  viale Teresa Michel 11,
  15121 Alessandria (Italy). \\
  \and 
  Benedetto Piccoli\thanks{E-mail: \texttt{piccoli@iac.rm.cnr.it}.
  }\\
  I.A.C., C.N.R.,\\
  Via dei Taurini 19, 00185 Roma
  (Italy).
}

\maketitle

\begin{abstract}
  This paper deals with conservation laws on networks, represented by graphs.
  Entropy-type conditions are considered to determine dynamics at nodes.
  Since entropy dispersion is a local concept, we consider a network
  composed by a single node
  $J$ with $n$ incoming and $m$ outgoing arcs.
  We extend at $J$ the classical Kru\v{z}kov entropy
  obtaining two conditions, denoted by (E1) and (E2): the first
  requiring entropy
  condition for all Kru\v{z}kov entropies, the second only for the value
  corresponding to sonic point. First we show that in case $n \ne m$,
  no Riemann solver can satisfy the strongest condition.
  Then we characterize all the Riemann solvers at $J$ satisfying
  the strongest condition (E1), in the case of nodes
  with at most two incoming and two outgoing arcs.
  Finally we focus three different Riemann solvers, introduced in
  previous papers.
  In particular, we show that the Riemann solver introduced for data networks
  is the only one always satisfying (E2).
\end{abstract}

\textit{Key Words:} scalar conservation laws, traffic flow,
Riemann solver, networks, entropy conditions.

\textit{AMS Subject Classifications:} 90B20, 35L65.

%
%
%
%
\section{Introduction}\label{se:introduction}

Nonlinear hyperbolic conservation laws on networks have recently attracted
a lot of interest in various fields: car traffic
\cite{C-G-P,garavello-piccoli_ARModel_2006,gp-book,Holden-Risebro_siam_1995},
gas dynamics~\cite{MR2418671,MR2223073,MR2219276,MR2247787,MR2311526,
  MR2377285,colombo-guerra-herty-sachers,
  MR2438778,ColomboMarcellini,MR2441091},
irrigation channels~\cite{MR2357767,MR2164806,MR2055319,MR1920161}
and supply chains~\cite{MR2357763,MR2318380}. 
A network is modeled by a graph: a finite collection of arcs
connected together by vertices.
On each arc we consider a scalar conservation law.
For instance one may think to
the Lighthill-Whitham-Richards model for car
traffic~\cite{Lighthill-Whitham_1955,Richards_1956}. However, our
results applies to the other application domains.

It is easy to check that the dynamic at nodes is not uniquely determined
by imposing the conservation of mass through vertices.
Then, to completely describe the network load evolution,
the first step is to appropriately define the concept of solution at a
vertex.\\
As in the classical theory of conservation laws, this problem
is equivalent to giving the solution Riemann problems (now at vertices).
More precisely, a Riemann problem at a vertex is simply a Cauchy
problem with constant
initial conditions in each arc of the vertex.
The map, which associates the solution to each Riemann problem at a vertex $J$,
is called a Riemann solver at $J$.
Similarly to the case of a real line, one has to resort to the concept
of weak solutions
in the sense of distributions and there are infinitely many Riemann solvers
producing weak solutions.
First one uses entropy type conditions inside arcs as for the real line. Then,
in order to select a particular solution (i.e. a Riemann solver)
at the vertex, one has to impose some additional conditions.
In~\cite{C-G-P}, for example, the authors required some rules about the
distribution of the fluxes in the arcs and a maximization
condition; see also~\cite{da-m-p,marigo-piccoli_2008_T_junction}.
It is then natural to ask if entropy-like conditions can be imposed also
at the vertex and not only inside arcs.

In this paper, we focus on a single vertex $J$, composed by $n$ incoming
and $m$ outgoing arcs and we extend the Kru\v{z}kov~\cite{MR0267257}
entropy-type conditions.
More precisely, we propose two different entropy conditions
for admissibility of solutions, called, respectively, (E1) and (E2).
The condition~(E1) is stronger than~(E2), indeed the first asks for Kru\v{z}kov
entropy condition to be verified for all entropies, while the second
asks only for the precise Kru\v{z}kov entropy corresponding to sonic point.
It is interesting to note that the entropy condition~(E1)
imposes strong restrictions both on Riemann solvers and on the geometry of
the vertex. Indeed, Riemann solvers satisfying~(E1) can exist only in the
case of vertices with the same number of incoming and outgoing arcs.

We then test our conditions on Riemann solvers considered in the literature.
First we can prove that the Riemann solver, introduced in~\cite{da-m-p}
for data networks, satisfies~(E2) and, in special situations, also~(E1).\\
Then we show that the Riemann solvers defined
in~\cite{C-G-P,marigo-piccoli_2008_T_junction} do not satisfy~(E2).
However, at least for the Riemann solver in~\cite{C-G-P}, the entropy
condition and the maximization procedure agree on some particular set,
over which the maximization is taken. Roughly speaking the solver
respects the entropy condition once traffic distribution is imposed.

The paper is organized as follows. Section~\ref{se:def} introduces the
basic definitions of networks and of solutions.
Section~\ref{se:Riemann_problem} deals with the solution to the Riemann
problem at the vertex $J$. Moreover, we introduce the entropy
conditions (E1) and (E2) for Riemann solvers at $J$.
In Section~\ref{se:RS_E1}, we determine which Riemann solvers satisfy
the entropy condition (E1). The paper ends with Section~\ref{se:examples},
which considers the Riemann solvers $\Rr_1$, $\Rr_2$ and $\Rr_3$,
introduced respectively in \cite{C-G-P,da-m-p,marigo-piccoli_2008_T_junction},
and analyzes what entropy conditions these Riemann solvers satisfy.

%
%
%
%
\section{Basic Definitions and Notations}\label{se:def}

Consider a node $J$ with $n$ incoming arcs $I_1,\ldots,I_n$ and $m$
outgoing arcs $I_{n+1},\ldots,I_{n+m}$.
We model each incoming arc $I_i$ ($i\in\{1,\ldots,n\}$)
of the node with the real interval $I_i=]-\infty,0]$ and
each outgoing arc $I_j$ ($j\in\{n+1,\ldots,n+m\}$)
of the node with the real interval $I_j=[0,+\infty[$.
On each arc $I_l$ ($l\in\{1,\ldots,n+m\}$), the traffic evolution
is given by
\begin{equation}
  \label{eq:LWR}
  (\rho_l)_t+f(\rho_l)_x=0,
\end{equation}
where $\rho_l=\rho_l(t,x)\in [0,\rho_{max}]$,
is the {\em density}, $v_l=v_l(\rho_l)$
is the {\em average velocity}
and $f(\rho_l)=v_l(\rho_l)\,\rho_l$ is the {\em flux}.
Hence the network load is described by a finite collection of
functions $\rho_l$ defined on $[0,+\infty[\times I_l$.
For simplicity, we put $\rho_{max}=1$.
On the flux $f$ we make the following assumption
\begin{itemize}
\item[{\bf (${\cal F}$)}]
  $f : [0,1] \rightarrow \R$ is a piecewise smooth concave function satisfying
  \begin{enumerate}
  \item $f(0)=f(1)=0$;

  \item there exists a unique $\sigma\in]0,1[$ such that $f$ is strictly
    increasing in $[0,\sigma[$ and strictly decreasing in $]\sigma,1]$.
  \end{enumerate}
\end{itemize}

\begin{definition} \label{deftau}
  Let $\tau:[0,1] \rightarrow [0,1]$ be the map such that:
  \begin{enumerate}
  \item $f(\tau(\rho))=f(\rho)$ for every $\rho\in[0,1]$;
    
  \item $\tau(\rho) \not= \rho$ for every $\rho\in[0,1]\setminus\{\sigma\}$.
  \end{enumerate}
\end{definition}

\begin{definition}
  A function $\rho_l\in C([0,+\infty[;L^1_{loc}(I_l))$ is an entropy-admissible
  solution to~(\ref{eq:LWR}) in the arc $I_l$ if,
  for every $k \in [0,\rho_{max}]$ and every
  $\tilde\varphi:[0,+\infty[\times I_l\to\R$ 
  smooth, positive with compact support in
  $]0,+\infty[\times \left(I_l\setminus\{0\}\right)$
  \begin{equation} \label{eq:entsol-oneroad}
    \int_0^{+\infty}\int_{I_l}\Big( |\rho_l -k|{\frac{\partial \tilde\varphi}
      {\partial t}} + \sgn(\rho_l-k)(f(\rho_l)- f(k))
    {\frac{\partial \tilde\varphi}
      {\partial x}}\Big)dxdt \geq 0.
  \end{equation}
\end{definition}

\begin{definition}\label{def:weak_solution}
  A collection of functions $\rho_l\in C([0,+\infty[;L^1_{loc}(I_l))$,
  ($l\in\{1,\ldots,n+m\}$) is a weak solution at $J$ if
  \begin{enumerate}
  \item for every $\l\in\{1,\ldots,n+m\}$, the function
    $\rho_l$ is an entropy-admissible solution to~(\ref{eq:LWR})
    in the arc $I_l$;

  \item for every $\l\in\{1,\ldots,n+m\}$ and for a.e. $t>0$, the function
    $x\mapsto\rho_l(t,x)$ has a version with bounded total variation;

  \item for a.e. $t>0$, it holds
    \begin{equation}\label{eq:RH}
      \sum\limits_{i=1}^n f(\rho_i (t, 0-)) = \sum\limits_{j=n
        +1}^{n+m}f(\rho_j (t, 0+))\,,
    \end{equation}
    where $\rho_l$ stands for the version with bounded total variation.
  \end{enumerate}
\end{definition}

%
%
%
%
\section{The Riemann Problem at $J$}\label{se:Riemann_problem}

Given $\rho_{1,0},\ldots,\rho_{n+m,0}\in[0,1]$, a Riemann problem at $J$
is a Cauchy problem at $J$ with constant initial data on each arc, i.e.
\begin{equation}
  \label{eq:RPatJ}
  \left\{
    \begin{array}{ll}
      \begin{array}{l}
        \frac\partial{\partial t}\rho_l+\frac\partial{\partial x}f(\rho_l)=0,
        \vspace{.2cm}\\
        \rho_l(0,\cdot)=\rho_{0,l}, 
      \end{array}
      & l\in\{1,\ldots,n+m\}.
    \end{array}
  \right.
\end{equation}

Now, we give some definitions for later use. The first one is the definition
of Riemann solver, which is a map giving a solution to the Riemann
problem~\eqref{eq:RPatJ}.

\begin{definition}\label{def:Riemann_solver}
  A Riemann solver $\mathcal{RS}$ is a function
  \begin{equation*}
    \begin{array}{rccc}
      \mathcal{RS}: & [0,1]^{n+m} & \longrightarrow
      & [0,1]^{n+m}\\
      & (\rho_{1,0},\ldots,\rho_{n+m,0}) & \longmapsto & 
      (\bar\rho_1,\ldots,\bar\rho_{n+m})
    \end{array} 
  \end{equation*}
  satisfying
  \begin{enumerate}
  \item \label{enum:1_def_RS}
    $\sum_{i=1}^nf(\bar\rho_i)=\sum_{j=n+1}^{n+m}f(\bar\rho_j)$;

  \item \label{enum:2_def_RS}
    for every $i\in\{1,\ldots,n\}$, the classical Riemann problem
    \begin{equation*}
      \left\{
        \begin{array}{l}
          \rho_t+f(\rho)_x=0,\hspace{1cm}x\in\R,\, t>0,\vspace{.2cm}\\
          \rho(0,x)=\left\{
            \begin{array}{ll}
              \rho_{i,0}, & \textrm{ if } x<0,\\
              \bar\rho_i, & \textrm{ if } x>0,
            \end{array}
          \right.
        \end{array}
      \right.
    \end{equation*}
    is solved with waves with negative speed;

  \item \label{enum:3_def_RS}
    for every $j\in\{n+1,\ldots,n+m\}$, the classical Riemann problem
    \begin{equation*}
      \left\{
        \begin{array}{l}
          \rho_t+f(\rho)_x=0,\hspace{1cm}x\in\R,\, t>0,\vspace{.2cm}\\
          \rho(0,x)=\left\{
            \begin{array}{ll}
              \bar\rho_j, & \textrm{ if } x<0,\\
              \rho_{j,0}, & \textrm{ if } x>0,
            \end{array}
          \right.
        \end{array}
      \right.
    \end{equation*}
    is solved with waves with positive speed.
  \end{enumerate}
\end{definition}

We introduce the concepts of equilibrium and consistency for Riemann solvers.
The fixed points of a Riemann solver are called equilibria,
while a Riemann solver has the consistency
condition when its image is contained in the equilibria.

%
%
\begin{definition}\label{def:equilibrium}
  We say that $(\rho_{1,0},\ldots,\rho_{n+m,0})$ is an equilibrium for the
  Riemann solver $\Rsol$ if
  \begin{equation*}
    \Rsol(\rho_{1,0},\ldots,\rho_{n+m,0})=(\rho_{1,0},\ldots,\rho_{n+m,0}).
  \end{equation*}
\end{definition}

%
%
\begin{definition}\label{def:consistency}
  We say that a Riemann solver $\Rsol$ satisfies the consistency condition if,
  for every $(\rho_{1,0},\ldots,\rho_{n+m,0})\in[0,1]^{n+m}$, then
  $\Rsol(\rho_{1,0},\ldots,\rho_{n+m,0})$ is an equilibrium for $\Rsol$.
\end{definition}

We introduce now the concepts of entropy functions and admissible entropy
conditions (E1) and (E2) for Riemann solvers. We are essentially extending the
Kru\v zkov entropy condition to the case of a node; see~\cite{MR0267257}.

%
%
\begin{definition}
  The function $\mathcal F:[0,1]^{n+m}\times[0,1]\to\R$, defined by
  \begin{eqnarray}
    \label{eq:entropy_flux_function}
    \mathcal F(\rho_1,\ldots,\rho_{n+m},k) & = &
    \sum_{i=1}^n\sgn(\rho_i- k)\left(f(\rho_i)-f(k)\right)\\
    & & -\sum_{j=n+1}^{n+m}\sgn(\rho_j- k)
    \left(f(\rho_j)-f(k)\right),\nonumber
  \end{eqnarray}
  is called entropy-flux function.
\end{definition}

\begin{definition}\label{def:entropy_RS_E1}
  A Riemann solver $\Rsol$ satisfies the entropy condition (E1) if,
  for every initial condition $(\rho_{1,0},\ldots,\rho_{n+m,0})$
  and for every $k\in[0,1]$, we have
  \begin{equation}
    \label{eq:entropy_RS_E1}
    \mathcal F(\bar\rho_1,\ldots,\bar\rho_{n+m},k)\ge0,
  \end{equation}
  where $(\bar\rho_1,\ldots,\bar\rho_{n+m})
  =\Rsol(\rho_{1,0},\ldots,\rho_{n+m,0})$.
\end{definition}

\begin{remark}
  If $k=0$, then equation~(\ref{eq:entropy_RS_E1}) becomes
  $\sum_{i=1}^nf(\bar\rho_i)\ge\sum_{j=n+1}^{n+m}\f j$.\\
  If $k=1$, then equation~(\ref{eq:entropy_RS_E1}) becomes
  $\sum_{i=1}^nf(\bar\rho_i)\le\sum_{j=n+1}^{n+m}\f j$. Therefore the entropy
  condition (E1) implies the conservation identity
  $\sum_{i=1}^nf(\bar\rho_i)=\sum_{j=n+1}^{n+m}\f j$.
\end{remark}

\begin{definition}\label{def:entropy_RS_E2}
  A Riemann solver $\Rsol$ satisfies the entropy condition (E2) if,
  for every initial condition $(\rho_{1,0},\ldots,\rho_{n+m,0})$, we have
  \begin{equation}
    \label{eq:entropy_RS_E2}
    \mathcal F(\bar\rho_1,\ldots,\bar\rho_{n+m},\sigma)\ge0,
  \end{equation}
  where $(\bar\rho_1,\ldots,\bar\rho_{n+m})
  =\Rsol(\rho_{1,0},\ldots,\rho_{n+m,0})$.
\end{definition}

\begin{remark}
  The entropy condition~(\ref{eq:entropy_RS_E1}) can be deduced in the
  following way.

  Fix, for every $l \in \{ 1, \ldots, n+m \}$, a smooth function
  $\fhi_l: [0, +\infty [ \times I_l \to [0, +\infty[$
  with support contained in $[0, +\infty [ \times [-M,M]$
  for some $M > 0$ and assume that
  $\fhi_{l'}(t,0)=\fhi_{l''}(t,0)$ for every $t \ge 0$ and
  $l', l'' \in \{ 1, \ldots, n+m \}$.
  Applying the divergence theorem to the inequality
  \begin{equation*}
    \sum_{l=1}^{n+m} \int_0^{+\infty} \int_{I_l}
    \left[ \abs{\bar \rho_l- k} \fhi_{l,t} +
      \sgn(\bar \rho_l- k) \left( f(\bar \rho_l) - f( k) \right) \fhi_{l,x}
    \right] dx dt \ge 0,
  \end{equation*}
  where $(\bar \rho_1, \ldots, \bar \rho_{n+m})$ is an equilibrium at $J$,
  we deduce~\eqref{eq:entropy_RS_E1}.

  Obviously, these kinds of entropies are not justified
  by physical considerations.  
\end{remark}

Finally, let us introduce sets $\Omega_l$ and $\Phi_l$,
related to the points~\ref{enum:2_def_RS} and~\ref{enum:3_def_RS}
of Definition~\ref{def:Riemann_solver}.

\begin{enumerate}
\item For every $i\in\{1,\ldots,n\}$ define
  \begin{equation}
    \label{eq:omega_i}
    \Omega_i=\left\{
      \begin{array}{ll}
        [0,f(\rho_{i,0})], & \textrm{ if } 0\le\rho_{i,0}\le\sigma,
        \vspace{.2cm}\\
        { }[0,f(\sigma)], & \textrm{ if } \sigma\le\rho_{i,0}\le1,
      \end{array}
    \right.
  \end{equation}
  and
  \begin{equation}
    \label{eq:Phi_i}
    \Phi_i=\left\{
      \begin{array}{ll}
        \{\rho_{i,0}\}\cup]\tau(\rho_{i,0}),1], &
        \textrm{ if } 0\le\rho_{i,0}\le\sigma,\vspace{.2cm}\\
        { }[\sigma,1], & \textrm{ if } \sigma \le \rho_{i,0} \le 1.
      \end{array}
    \right.
  \end{equation}

\item For every $j\in\{n+1,\ldots,n+m\}$ define
  \begin{equation}
    \label{eq:omega_j}
    \Omega_j=\left\{
      \begin{array}{ll}
        [0,f(\sigma)], & \textrm{ if } 0\le\rho_{j,0}\le\sigma,
        \vspace{.2cm}\\
        { }[0,f(\rho_{j,0})], & \textrm{ if } \sigma\le\rho_{j,0}\le1,
      \end{array}
    \right.
  \end{equation}
  and
  \begin{equation}
    \label{eq:Phi_j}
    \Phi_j=\left\{
      \begin{array}{ll}
        { }[0,\sigma], & \textrm{ if } 0\le\rho_{j,0}\le\sigma,\vspace{.2cm}\\
        \{\rho_{j,0}\}\cup[0,\tau(\rho_{j,0})[, &
        \textrm{ if } \sigma\le\rho_{j,0}\le1.
      \end{array}
    \right.
  \end{equation}
\end{enumerate}

The following Proposition links the previous sets
with Definition~\ref{def:Riemann_solver}.

\begin{prop}\label{prop:stati_ammissibili}
  The following statements hold.
  \begin{enumerate}
  \item For every $i\in\{1,\ldots,n\}$, an element $\bar\gamma$ belongs to
    $\Omega_i$ if and only if there exists $\bar\rho_i\in[0,1]$ such that
    $f(\bar\rho_i)=\bar\gamma$ and point~\ref{enum:2_def_RS} of
    Definition~\ref{def:Riemann_solver} is satisfied.

  \item For every $j\in\{n+1,\ldots,n+m\}$, an element $\bar\gamma$ belongs to
    $\Omega_j$ if and only if there exists $\bar\rho_j\in[0,1]$ such that
    $f(\bar\rho_j)=\bar\gamma$ and point~\ref{enum:3_def_RS} of
    Definition~\ref{def:Riemann_solver} is satisfied.
  \end{enumerate}
\end{prop}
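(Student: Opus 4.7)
The plan is to prove both statements by a case analysis based on the position of $\rho_{i,0}$ (respectively $\rho_{j,0}$) relative to the critical density $\sigma$, invoking the classical Riemann-problem theory for a concave scalar flux. I would treat the incoming-arc case (item~1) in detail; the outgoing-arc case (item~2) is entirely symmetric, with the roles of negative and positive speeds interchanged and the sets $\Omega_j, \Phi_j$ mirrored about $\sigma$.

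First I would recall that, for a flux $f$ satisfying $(\mathcal F)$, the entropy solution of the classical Riemann problem with left state $u_L$ and right state $u_R$ consists of a single shock of Rankine--Hugoniot speed $(f(u_R)-f(u_L))/(u_R-u_L)$ when $u_L \le u_R$, and of a rarefaction fan whose characteristic speeds are $f'(\rho)$ for $\rho\in[u_R,u_L]$ when $u_L > u_R$. Condition~\ref{enum:2_def_RS} of Definition~\ref{def:Riemann_solver} then translates into requiring every such speed to be non-positive.

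Next I would exhibit the admissible set $\Phi_i$ by a sub-case argument. If $0\le\rho_{i,0}\le\sigma$, then $\bar\rho_i=\rho_{i,0}$ (trivial solution) and $\bar\rho_i\in{}]\tau(\rho_{i,0}),1]$ (shock with $f(\bar\rho_i)<f(\rho_{i,0})$, hence strictly negative speed) satisfy the constraint, while any $\bar\rho_i<\rho_{i,0}$ produces a rarefaction with characteristic speeds $f'(\rho)>0$ and any $\bar\rho_i\in{}]\rho_{i,0},\tau(\rho_{i,0})]$ produces a shock of non-negative speed. If $\sigma\le\rho_{i,0}\le 1$, then every $\bar\rho_i\in[\sigma,1]$ works: either a rarefaction lying entirely in $[\sigma,1]$ where $f'\le 0$, or a shock towards higher density whose speed is non-positive because $f$ is decreasing there; conversely, $\bar\rho_i<\sigma$ creates a rarefaction fan containing positive speeds $f'(\bar\rho_i)>0$. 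Taking the image under $f$ then yields, in the first sub-case, $f(\Phi_i)=\{f(\rho_{i,0})\}\cup[0,f(\rho_{i,0})[\,=[0,f(\rho_{i,0})]=\Omega_i$, and in the second, $f(\Phi_i)=[0,f(\sigma)]=\Omega_i$, establishing the claimed equivalence.

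I do not foresee a substantive obstacle, the argument being bookkeeping on the classical Riemann problem. The only subtlety worth noting is that $\Phi_i$ is strictly smaller than the full preimage $f^{-1}(\Omega_i)\cap[0,1]$, since for $\rho_{i,0}<\sigma$ the value $\bar\rho_i=\tau(\rho_{i,0})$ is excluded (it would correspond to a stationary shock, which is not of strictly negative speed). This does not affect the Proposition: because $f(\tau(\rho_{i,0}))=f(\rho_{i,0})$ and $\rho_{i,0}\in\Phi_i$, the flux level $f(\rho_{i,0})$ is still attained, and only \emph{existence} of some admissible $\bar\rho_i$ realising a given $\bar\gamma\in\Omega_i$ is required.
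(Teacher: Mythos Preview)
Your argument is correct: the case analysis on the position of $\rho_{i,0}$ relative to $\sigma$, combined with the standard shock/rarefaction dichotomy for a concave flux, yields exactly $\Phi_i$ as the admissible set of right states, and $f(\Phi_i)=\Omega_i$ gives the claim. The paper itself omits the proof as trivial, so there is nothing to compare against; your write-up simply spells out what the authors left to the reader, including the small subtlety about the boundary value $\tau(\rho_{i,0})$, which you handle correctly.
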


The proof is trivial and hence omitted.
The main result of this Section is that, if $n \ne m$, then
every Riemann solver $\Rsol$ at $J$ does not satisfy the entropy
condition (E1). We first need the following result.

\begin{prop}\label{prop:n_ne_m}
  Fix a node $J$ with $n$ incoming arcs and $m$ outgoing arcs and
  a Riemann solver $\Rr$ satisfying the entropy condition (E1).
  Denote with $(\bar\rho_1,\ldots,\bar\rho_{n+m})$ the image through
  $\Rr$ of the initial condition $(\rho_{1,0},\ldots,\rho_{n+m,0})$.
  \begin{enumerate}
  \item If $n>m$, then $\min\left\{\bar\rho_1,\ldots,\bar\rho_{n}\right\}=0$.

  \item If $n<m$, then
    $\max\left\{\bar\rho_{n+1},\ldots,\bar\rho_{n+m}\right\}=1$.
  \end{enumerate}
\end{prop}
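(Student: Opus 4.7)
The plan is to exploit condition (E1) at Kru\v{z}kov levels $k$ sitting just above $0$ (for part~1) and just below $1$ (for part~2), where every $\sgn(\bar\rho_l-k)$ becomes pinned down by whether $\bar\rho_l$ equals the endpoint. At such $k$ the sums in $\mathcal{F}$ collapse after applying the conservation identity built into Definition~\ref{def:Riemann_solver}, and the leftover coefficient produces the count we want.

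For part~1 I would set $n_0=\#\{i\in\{1,\ldots,n\}:\bar\rho_i=0\}$ and $m_0=\#\{j\in\{n+1,\ldots,n+m\}:\bar\rho_j=0\}$, then pick any $k\in(0,\sigma)$ strictly smaller than every strictly positive $\bar\rho_l$. Then $f(k)>0$, and for every $l$ one has $\sgn(\bar\rho_l-k)=-1$ if $\bar\rho_l=0$ and $\sgn(\bar\rho_l-k)=+1$ otherwise. Plugging this into~\eqref{eq:entropy_flux_function}, using $f(0)=0$, and applying $\sum_{i=1}^n f(\bar\rho_i)=\sum_{j=n+1}^{n+m}f(\bar\rho_j)$ to cancel the $f(\bar\rho_l)$ terms, the remainder is purely a multiple of $f(k)$, namely
\begin{equation*}
\mathcal{F}(\bar\rho_1,\ldots,\bar\rho_{n+m},k)=\bigl[\,2(n_0-m_0)+(m-n)\,\bigr]\,f(k).
\end{equation*}
Since (E1) requires this to be nonnegative and $f(k)>0$, we conclude $2(n_0-m_0)\ge n-m$. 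When $n>m$, this integer inequality forces $n_0-m_0\ge 1$, hence $n_0\ge 1$, i.e.\ at least one $\bar\rho_i$ equals $0$, so $\min\{\bar\rho_1,\ldots,\bar\rho_n\}=0$.

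Part~2 is perfectly symmetric: set $n_1,m_1$ to count indices with $\bar\rho_l=1$, pick $k\in(\sigma,1)$ strictly greater than every $\bar\rho_l<1$, and use $f(1)=0$ to derive, by the same cancellation,
\begin{equation*}
\mathcal{F}(\bar\rho_1,\ldots,\bar\rho_{n+m},k)=\bigl[\,2(m_1-n_1)+(n-m)\,\bigr]\,f(k)\ge 0,
\end{equation*}
which in the case $n<m$ forces $m_1\ge 1$ and therefore $\max\{\bar\rho_{n+1},\ldots,\bar\rho_{n+m}\}=1$.

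The only delicate step is the sign bookkeeping when assembling $\mathcal{F}$ at the chosen $k$; beyond that the argument is routine, provided one is careful to choose $k$ separated from the nonzero (respectively non-unit) values of the $\bar\rho_l$ so that the $\sgn$'s are unambiguous.
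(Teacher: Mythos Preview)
Your proof is correct and follows essentially the same route as the paper: choose a Kru\v{z}kov level $k$ near the endpoint ($0$ for part~1, $1$ for part~2), use the conservation identity to cancel the $f(\bar\rho_l)$ terms, and read off the resulting coefficient of $f(k)$. The only cosmetic difference is that the paper argues by contradiction (assuming no incoming $\bar\rho_i$ vanishes and deriving $(m-n-2\#J)f(k)\ge 0$), whereas you track both counts $n_0,m_0$ directly and obtain the slightly sharper inequality $2(n_0-m_0)\ge n-m$; the two computations are identical once one sets $n_0=0$ in yours.
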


\begin{proof}
  Consider first the case $n>m$. Suppose by contradiction that
  $\min\left\{\bar\rho_1,\ldots,\bar\rho_{n}\right\}>0$.
  Define the set $J=\left\{j\in\{n+1,\ldots,n+m\}\,:\bar\rho_j=0\right\}$ and
  fix $0<k<\min\left\{\bar\rho_l\,:l\in\{1,\ldots,n+m\}\setminus J\right\}$.
  Thus, the entropy inequality
  $\entropy{\bar \rho_1, \ldots, \bar \rho_{n+m}, k}$ becomes,
  \begin{displaymath}
    \sum_{i=1}^n\left[f(\bar\rho_i)-f(k)\right]\ge
    \sum_{j\in\{n+1,\ldots,n+m\}\setminus J}\left[f(\bar\rho_j)-f(k)\right]
    +\sum_{j\in J}f(k).
  \end{displaymath}
  By point~\ref{enum:1_def_RS} of
  Definition~\ref{def:Riemann_solver}, we deduce that
  \begin{displaymath}
    -nf(k)\ge -(m-\#(J))f(k)+\#(J)f(k),
  \end{displaymath}
  where $\# (J)$ denotes the cardinality of $J$; thus
  $(m-n-2\#(J))f(k)\ge0$, which is a contradiction.

  Consider now the situation $n<m$. By contradiction we assume that
  $\max\left\{\bar\rho_{n+1},\ldots,\bar\rho_{n+m}\right\}<1$.
  Define the set $I=\left\{i\in\{1,\ldots,n\}\,:\bar\rho_i=1\right\}$ and
  fix $\max\left\{\bar\rho_l\,:l\in\{1,\ldots,n+m\}\setminus I\right\}<k<1$.
  Thus, the entropy inequality
  $\entropy{\bar \rho_1, \ldots, \bar \rho_{n+m}, k}$ becomes,
  \begin{displaymath}
    \sum_{i\in\{1,\ldots,n\}\setminus I}\left[f(k)-f(\bar\rho_i)\right]
    -\sum_{i\in I}f(k)\ge
    \sum_{j=n+1}^{n+m}\left[f(k)-f(\bar\rho_j)\right].
  \end{displaymath}
  By point~\ref{enum:1_def_RS}
  of Definition~\ref{def:Riemann_solver}, we deduce that
  $(n-2\#(I)-m)f(k)\ge0$, which is a contradiction.
\end{proof}

\begin{theorem}
  \label{thm:n_ne_m}
  Fix a node $J$ with $n$ incoming arcs and $m$ outgoing arcs
  and suppose that $n\ne m$.
  Every Riemann solver $\Rsol$ at $J$ does
  not satisfy the entropy condition (E1).
\end{theorem}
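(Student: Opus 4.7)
The plan is to use Proposition \ref{prop:n_ne_m} together with the structural description of the admissible sets $\Phi_l$ from \eqref{eq:Phi_i} and \eqref{eq:Phi_j} (combined with point~\ref{enum:2_def_RS}/\ref{enum:3_def_RS} of Definition~\ref{def:Riemann_solver}) to exhibit, in each of the two cases $n>m$ and $n<m$, an initial datum for which no Riemann solver verifying (E1) can exist.

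First I would treat the case $n>m$. I would inspect $\Phi_i$ and observe that $0\in\Phi_i$ is possible only when $\rho_{i,0}=0$: indeed if $0\le\rho_{i,0}\le\sigma$, then $\Phi_i=\{\rho_{i,0}\}\cup\,]\tau(\rho_{i,0}),1]$, which contains $0$ only when $\rho_{i,0}=0$ (since $\tau(\rho_{i,0})\ge\sigma>0$ in this range); and if $\sigma\le\rho_{i,0}\le 1$, then $\Phi_i=[\sigma,1]$, which never contains $0$. Consequently, if I choose any initial datum $(\rho_{1,0},\ldots,\rho_{n+m,0})$ with $\rho_{i,0}>0$ for every incoming index $i\in\{1,\ldots,n\}$, then for any Riemann solver $\Rr$ the image $(\bar\rho_1,\ldots,\bar\rho_{n+m})=\Rr(\rho_{1,0},\ldots,\rho_{n+m,0})$ must satisfy $\bar\rho_i>0$ for every $i\in\{1,\ldots,n\}$. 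This directly contradicts the conclusion of Proposition~\ref{prop:n_ne_m}, so no Riemann solver at $J$ can satisfy (E1).

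The case $n<m$ is handled by the dual argument on the outgoing arcs. I would note that $1\in\Phi_j$ forces $\rho_{j,0}=1$: for $0\le\rho_{j,0}\le\sigma$ one has $\Phi_j=[0,\sigma]\not\ni 1$, and for $\sigma\le\rho_{j,0}\le 1$ one has $\Phi_j=\{\rho_{j,0}\}\cup[0,\tau(\rho_{j,0})[$, which contains $1$ only when $\rho_{j,0}=1$ (since $\tau(\rho_{j,0})\le\sigma<1$ in this range). Choosing any initial datum with $\rho_{j,0}<1$ for every $j\in\{n+1,\ldots,n+m\}$ then forces $\bar\rho_j<1$ for all such $j$, again contradicting Proposition~\ref{prop:n_ne_m}.

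The heart of the argument is Proposition~\ref{prop:n_ne_m}; what remains is just the verification that the admissibility sets $\Phi_l$ have no room to produce the extremal values $0$ or $1$ unless the initial datum already equals them. There is no real obstacle once one checks the four sub-cases of $\Phi_i$ and $\Phi_j$, so the proof is essentially a short contradiction argument.
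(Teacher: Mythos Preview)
Your proof is correct and follows essentially the same approach as the paper: both argue by contradiction, invoking Proposition~\ref{prop:n_ne_m} on an initial datum with all $\rho_{i,0}\ne 0$ (resp.\ all $\rho_{j,0}\ne 1$) to force an inadmissible value $\bar\rho_{i_1}=0$ (resp.\ $\bar\rho_{j_1}=1$). The only cosmetic difference is that you verify inadmissibility via the explicit description of the sets $\Phi_l$, whereas the paper phrases it as the corresponding wave not having negative (resp.\ positive) speed; these are equivalent by construction of $\Phi_l$.
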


\begin{proof}
  Suppose, by contradiction, that there exists a
  Riemann solver $\Rr$ at $J$ satisfying the entropy condition (E1).

  Assume $n>m$ and consider an initial condition
  $(\rho_{1,0},\ldots,\rho_{n+m,0})$ satisfying $\rho_{i,0}\ne0$ for
  every $i\in\{1,\ldots,n\}$. If
  $(\bar\rho_1,\ldots,\bar\rho_{n+m})=\Rr(\rho_{1,0},\ldots,\rho_{n+m,0})$,
  then, by Proposition~\ref{prop:n_ne_m}, 
  there exists $i_1\in\{1,\ldots,n\}$ such that $\bar\rho_{i_1}=0$,
  which is a contradiction since the wave $(\rho_{i_1,0},\bar\rho_{i_1})$
  has not negative speed.

  Assume now $n<m$ and consider an initial condition
  $(\rho_{1,0},\ldots,\rho_{n+m,0})$ satisfying $\rho_{j,0}\ne1$ for
  every $j\in\{n+1,\ldots,n+m\}$.
  By Proposition~\ref{prop:n_ne_m}, if
  $(\bar\rho_1,\ldots,\bar\rho_{n+m})=\Rr(\rho_{1,0},\ldots,\rho_{n+m,0})$,
  then there exists $j_1\in\{n+1,\ldots,n+m\}$ such that $\bar\rho_{j_1}=1$,
  which is a contradiction since the wave $(\bar\rho_{j_1},\rho_{j_1,0})$
  has not positive speed.
\end{proof}

%
%
%
%
\section{Riemann solvers satisfying (E1)}\label{se:RS_E1}

In this Section we determine which Riemann solver satisfies the entropy
condition (E1), in the sense of Definition~\ref{def:entropy_RS_E1},
for nodes with $n = m \in \{ 1, 2 \}$.
In the case $n \ne m$, Theorem~\ref{thm:n_ne_m} implies that every Riemann
solver does not satisfy (E1). 
Moreover if $n = m = 1$, then there exists exactly one Riemann solver at $J$
satisfying (E1), while if $n = m = 2$, then there exist infinitely many
Riemann solvers satisfying (E1); see Sections~\ref{sse:n=m=1}
and~\ref{sse:n=m=2}.
We do not treat the case $n = m > 2$, for the huge number of different
situations.

%
%
\subsection{Nodes with $n = m =1$}
\label{sse:n=m=1}

In this subsection, we fix a node $J$ with one incoming and one outgoing
arc. The following result holds.

\begin{prop}\label{prop:1x1}
  A Riemann solver $\Rr$ at $J$ satisfies the entropy condition (E1)
  if and only if, for every initial datum $(\rho_{1,0},\rho_{2,0})$,
  the image $(\bar\rho_1,\bar\rho_2)=\Rr(\rho_{1,0},\rho_{2,0})$
  satisfies either
  \begin{equation}\label{eq:cond1_E1}
    \bar\rho_1=\bar\rho_2
  \end{equation}
  or
  \begin{equation}\label{eq:cond2_E1}
    \bar\rho_1<\bar\rho_2\quad\textrm{ and }\quad f(\bar\rho_1)=f(\bar\rho_2).
  \end{equation} 
\end{prop}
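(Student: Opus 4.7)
The plan is to verify both implications by a direct case analysis of the one-node entropy functional
\[
\mathcal F(\rho_1,\rho_2,k) = \sgn(\rho_1-k)(f(\rho_1)-f(k)) - \sgn(\rho_2-k)(f(\rho_2)-f(k)).
\]

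For the \emph{necessity} of (\ref{eq:cond1_E1}) or (\ref{eq:cond2_E1}), I would first plug $k=0$ and $k=1$ into (E1), which as in the remark following Definition~\ref{def:entropy_RS_E1} yields the flux conservation $f(\bar\rho_1) = f(\bar\rho_2)$. By Definition~\ref{deftau} this forces $\bar\rho_2 \in \{\bar\rho_1, \tau(\bar\rho_1)\}$, so only the configuration $\bar\rho_1 > \bar\rho_2$ with $f(\bar\rho_1) = f(\bar\rho_2)$ remains to be ruled out. Since $f$ is strictly monotone on each of $[0,\sigma]$ and $[\sigma,1]$, this would force $\bar\rho_2 < \sigma < \bar\rho_1$; picking $k = \sigma$, the two sign factors become $+1$ and $-1$, giving $\mathcal F(\bar\rho_1, \bar\rho_2, \sigma) = 2(f(\bar\rho_1) - f(\sigma)) < 0$ and contradicting (E1).

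For the \emph{sufficiency}, I would fix $(\bar\rho_1,\bar\rho_2)$ satisfying one of the two conditions and check $\mathcal F(\bar\rho_1,\bar\rho_2,k) \ge 0$ for arbitrary $k \in [0,1]$. If $\bar\rho_1 = \bar\rho_2$, the two terms of $\mathcal F$ are identical and cancel. If $\bar\rho_1 < \bar\rho_2$ with $f(\bar\rho_1) = f(\bar\rho_2)$, then again monotonicity of $f$ on each side of $\sigma$ gives $\bar\rho_1 < \sigma < \bar\rho_2$, and I would split according to the position of $k$: for $k \le \bar\rho_1$ or $k \ge \bar\rho_2$ the two sign factors agree and $\mathcal F$ collapses to $\pm(f(\bar\rho_1) - f(\bar\rho_2)) = 0$; for $\bar\rho_1 < k < \bar\rho_2$ the signs are $-1$ and $+1$, producing $\mathcal F = 2(f(k) - f(\bar\rho_1))$, which is nonnegative because either $k \le \sigma$ (so $f(k) \ge f(\bar\rho_1)$ by monotonicity on $[0,\sigma]$) or $k \ge \sigma$ (so $f(k) \ge f(\bar\rho_2) = f(\bar\rho_1)$ by monotonicity on $[\sigma,1]$).

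The only minor delicacy worth flagging is the convention $\sgn(0)=0$ at the endpoint values $k \in \{\bar\rho_1,\bar\rho_2\}$, where one summand of $\mathcal F$ vanishes and the remaining one evaluates to zero thanks to flux equality; beyond that, the argument is a routine three-way case split and no real obstacle arises.
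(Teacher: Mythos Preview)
Your proof is correct and follows essentially the same route as the paper's: both establish $f(\bar\rho_1)=f(\bar\rho_2)$ first (the paper invokes point~\ref{enum:1_def_RS} of Definition~\ref{def:Riemann_solver}, you instead test $k=0,1$), then rule out $\bar\rho_1>\bar\rho_2$ by evaluating $\mathcal F$ at $k=\sigma$. Your sufficiency argument is actually more explicit than the paper's, which simply declares that direction ``trivial''.
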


\begin{proof}
  Consider first a Riemann solver $\Rr$ satisfying the entropy condition (E1).
  By~\ref{enum:1_def_RS}
  of Definition~\ref{def:Riemann_solver}, it is clear that
  $f(\bar\rho_1)=f(\bar\rho_2)$. Assume by contradiction that
  $\bar\rho_1>\bar\rho_2$. Since $f(\bar\rho_1)=f(\bar\rho_2)$, we
  easily deduce that $\bar\rho_2<\sigma<\bar\rho_1$.
  Putting $k=\sigma$ in equation~(\ref{eq:entropy_RS_E1}) we derive
  \begin{displaymath}
    f(\bar\rho_1)-f(\sigma)\ge f(\sigma)-f(\bar\rho_2),
  \end{displaymath}
  which is, by assumptions, equivalent to $f(\bar\rho_1)\ge f(\sigma)$,
  and so we get a contradiction.

  Consider now a Riemann solver $\Rr$ such that,
  for every initial datum $(\rho_{1,0},\rho_{2,0})$,
  the image $(\bar\rho_1,\bar\rho_2)=\Rr(\rho_{1,0},\rho_{2,0})$
  satisfies either~(\ref{eq:cond1_E1}) or~(\ref{eq:cond2_E1}).
  It is trivial to prove that (E1) holds.
\end{proof}

\begin{theorem}
  There exists a unique Riemann solver $\Rr$ at $J$ satisfying the entropy
  condition (E1). This Riemann solver satisfies the consistency condition
  and coincides with the Riemann solver introduced in~\cite{C-G-P}
  for traffic or with the Riemann solver introduced in~\cite{da-m-p}.
\end{theorem}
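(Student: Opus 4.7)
The plan is to combine Proposition~\ref{prop:1x1} with the admissibility sets $\Phi_1,\Phi_2$ from Proposition~\ref{prop:stati_ammissibili}. Proposition~\ref{prop:1x1} forces any (E1)-admissible pair $(\brho_1,\brho_2)$ to satisfy either $\brho_1=\brho_2$, or $\brho_1<\sigma<\brho_2$ with $\brho_2=\tau(\brho_1)$. Moreover, points~\ref{enum:2_def_RS}--\ref{enum:3_def_RS} of Definition~\ref{def:Riemann_solver} require $\brho_1\in\Phi_1$ and $\brho_2\in\Phi_2$. I would then split into four cases according to whether each $\rho_{l,0}$ lies in $[0,\sigma]$ or in $]\sigma,1]$ and, for each case, enumerate the pairs in $\Phi_1\times\Phi_2$ compatible with Proposition~\ref{prop:1x1}.

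In each case the sets $\Phi_l$ are explicit and the enumeration is short. For example, when $\rho_{1,0}\le\sigma$ and $\rho_{2,0}\le\sigma$, the bound $\tau(\rho_{1,0})\ge\sigma$ gives $\Phi_1\cap\Phi_2=\{\rho_{1,0}\}$, and no pair with $\brho_1<\sigma<\brho_2$ can exist since $\Phi_2\subseteq[0,\sigma]$; hence $(\brho_1,\brho_2)=(\rho_{1,0},\rho_{1,0})$ is the only choice. Analogous enumerations in the remaining three cases yield $(\sigma,\sigma)$ when $\rho_{1,0}>\sigma$ and $\rho_{2,0}\le\sigma$, and $(\rho_{2,0},\rho_{2,0})$ when both $\rho_{l,0}>\sigma$. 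In the mixed configuration $\rho_{1,0}<\sigma<\rho_{2,0}$, the sign of $f(\rho_{1,0})-f(\rho_{2,0})$ (equivalently, the position of $\rho_{1,0}$ relative to $\tau(\rho_{2,0})$) selects either $(\rho_{1,0},\rho_{1,0})$, or $(\rho_{2,0},\rho_{2,0})$, or, in the equal-flux subcase, the standing pair $(\rho_{1,0},\rho_{2,0})$. In every case $f(\brho_1)=f(\brho_2)$ equals the minimum between the demand of the incoming arc and the supply of the outgoing arc, which is precisely the prescription of the Riemann solvers of~\cite{C-G-P} and~\cite{da-m-p} at a $1\times 1$ node.

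Consistency then follows automatically: the equilibrium $(\brho_1,\brho_2)$ produced above lies in $\Phi_1(\brho_1)\times\Phi_2(\brho_2)$ (since $\rho\in\Phi_l(\rho)$ holds trivially in both definitions \eqref{eq:Phi_i} and \eqref{eq:Phi_j}) and already meets the conclusion of Proposition~\ref{prop:1x1}, so by the uniqueness just proven $\Rr(\brho_1,\brho_2)=(\brho_1,\brho_2)$. There is no substantive obstacle in the argument, only careful bookkeeping of the case analysis; some attention is needed for the boundary subcases $\rho_{l,0}=\sigma$ and for the equal-flux subcase of the mixed configuration, where the two alternatives of Proposition~\ref{prop:1x1} formally overlap but collapse to the same pair.
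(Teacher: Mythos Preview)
Your proposal is correct and follows essentially the same approach as the paper's proof: both combine Proposition~\ref{prop:1x1} with the admissibility constraints $\brho_l\in\Phi_l$ from Proposition~\ref{prop:stati_ammissibili}, and then do a case analysis on the positions of $\rho_{1,0},\rho_{2,0}$ relative to $\sigma$, with the mixed case $\rho_{1,0}<\sigma<\rho_{2,0}$ further split according to the sign of $f(\rho_{1,0})-f(\rho_{2,0})$. Your organization is slightly tidier (you work directly with $\Phi_1\cap\Phi_2$ rather than treating $\rho_{1,0}=\rho_{2,0}$ separately first), and you add an explicit demand/supply identification with the solvers of~\cite{C-G-P,da-m-p} and an explicit consistency argument, but the substance is the same.
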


\begin{proof}
  Fix an initial datum $(\rho_{1,0},\rho_{2,0})$. We show that there exists
  a unique $(\bar\rho_1,\bar\rho_2)$, which is the image of an entropy
  admissible Riemann solver.

  If $\rho_{1,0}=\rho_{2,0}$, then we claim that
  $\bar\rho_1=\bar\rho_2=\rho_{1,0}$. Assume by contradiction that
  $\bar\rho_1\ne\bar\rho_2$. In this case either $\bar\rho_1<\sigma<\bar\rho_2$
  or $\bar\rho_2<\sigma<\bar\rho_1$. By Proposition~\ref{prop:1x1}, the
  only possibility is $\bar\rho_1<\sigma<\bar\rho_2$.
  By Proposition~\ref{prop:stati_ammissibili}, either $\bar\rho_1=\rho_{1,0}$
  or $\bar\rho_2=\rho_{2,0}$. In the first case $\bar\rho_2=\tau(\rho_{2,0})$,
  while in the second one $\bar\rho_1=\tau(\rho_{1,0})$. It is not possible.

  Assume now that $\rho_{1,0}\ne\rho_{2,0}$.
  We have some different possibilities.
  \begin{enumerate}
  \item $\max\{\rho_{1,0},\rho_{2,0}\}\le\sigma$.
    By Proposition~\ref{prop:stati_ammissibili}, we deduce that
    $\bar\rho_2\in[0,\sigma]$. Moreover, by Proposition~\ref{prop:1x1},
    we deduce that $\bar\rho_1=\rho_{1,0}$;
    hence $\bar\rho_2=\bar\rho_1=\rho_{1,0}$. This solution respects all the
    properties of Definition~\ref{def:Riemann_solver} and the entropy
    condition~(\ref{eq:entropy_RS_E1}).

  \item $\min\{\rho_{1,0},\rho_{2,0}\}\ge\sigma$.
    By Proposition~\ref{prop:stati_ammissibili}, we deduce that
    $\bar\rho_1\in[\sigma,1]$. Moreover, by Proposition~\ref{prop:1x1},
    we deduce that $\bar\rho_2=\rho_{2,0}$;
    hence $\bar\rho_2=\bar\rho_1=\rho_{2,0}$. This solution respects all the
    properties of Definition~\ref{def:Riemann_solver} and the entropy
    condition~(\ref{eq:entropy_RS_E1}).

  \item $\rho_{1,0}<\sigma<\rho_{2,0}$.
    By Proposition~\ref{prop:stati_ammissibili}, we deduce that
    $\bar\rho_1=\rho_{1,0}$ or $\bar\rho_1>\sigma$ and that
    $\bar\rho_2=\rho_{2,0}$ or $\bar\rho_2<\sigma$.\\
    If $f(\rho_{1,0})=f(\rho_{2,0})$, then,
    by Proposition~\ref{prop:1x1}, the only possibility is that
    $\bar\rho_1=\rho_{1,0}$ and $\bar\rho_2=\rho_{2,0}$.\\
    If $f(\rho_{1,0})>f(\rho_{2,0})$, then,
    by Proposition~\ref{prop:1x1}, the only possibility is that
    $\bar\rho_1=\bar\rho_2=\rho_{2,0}$.\\
    Finally, if $f(\rho_{1,0})<f(\rho_{2,0})$, then,
    by Proposition~\ref{prop:1x1}, the only possibility is that
    $\bar\rho_1=\bar\rho_2=\rho_{1,0}$.\\
    In all the cases, the solution respects all the
    properties of Definition~\ref{def:Riemann_solver} and the entropy
    condition~(\ref{eq:entropy_RS_E1}).

  \item $\rho_{2,0}<\sigma<\rho_{1,0}$.
    By Proposition~\ref{prop:stati_ammissibili}, we deduce that
    $\bar\rho_1\ge\sigma$ and $\bar\rho_2\le\sigma$.
    By Proposition~\ref{prop:1x1}, the only possibility is that
    $\bar\rho_1=\bar\rho_2=\sigma$.
    The solution respects all the
    properties of Definition~\ref{def:Riemann_solver} and the entropy
    condition~(\ref{eq:entropy_RS_E1}).
  \end{enumerate}

  The proof is completed.
\end{proof}

\begin{remark}
  In~\cite{g-n-p-t}, the authors described all the Riemann solvers, with
  suitable properties, for nodes $J$ with $n = m = 1$.
  The unique Riemann solver $\Rsol$ satisfying (E1) corresponds to the Riemann
  solver generated by the set $X = \{ f(\sigma) \}$ and described in
  Section~3.1 of~\cite{g-n-p-t}.
\end{remark}

\begin{remark}
  \label{rmk:f1f2}
  One can try to generalize the entropy condition (E1),
  at least for nodes with $n = m = 1$, to the case of fluxes depending
  on the arcs. Unfortunately this is not a trivial problem.
  Consider indeed the following example. Let $f_1: [0,1] \to \R$,
  $f_2: [0,1] \to \R$ be two fluxes satisfying $(\mathcal F)$ and assume
  that:
  \begin{enumerate}
  \item $f_1$ is the flux in the arc $I_1$;

  \item $f_2$ is the flux in the arc $I_2$;

  \item $\sigma = \frac12$ is the point of maximum for both $f_1$ and $f_2$;

  \item $f_1(\rho) < f_2(\rho)$ for every $\rho \in ]0,1[$.
  \end{enumerate}
  Choose $0 < \bar \rho_2 < \bar \rho_1 < \frac12$ such that
  $f_1(\bar \rho_1) = f_2(\bar \rho_2)$ and take
  $k \in [\bar \rho_2, \bar \rho_1]$; see Figure~\ref{fig:remark_f1f2}.
  Then, the entropy condition~\eqref{eq:entropy_RS_E1} becomes
  \begin{displaymath}
    f_1 (\bar \rho_1) - f_1(k) \ge 
    f_2(k) - f_2(\bar \rho_2),
  \end{displaymath}
  which is equivalent to $f_1(k) + f_2(k) \le f_1 (\bar \rho_1) +
  f_2 (\bar \rho_2)$. The last inequality does not hold for $k = \bar \rho_1$
  and for all $k \in [\bar \rho_2, \bar \rho_1]$ near to $\bar \rho_1$.

  \begin{figure}
    \centering
    \begin{psfrags}
      \psfrag{0}{$0$} \psfrag{1}{$1$} \psfrag{s}{$\frac12$}
      \psfrag{f1}{$f_1$} \psfrag{f2}{$f_2$}
      \psfrag{r1}{$\bar \rho_1$} \psfrag{r2}{$\bar \rho_2$}
      \psfrag{rho}{$\rho$}
      \includegraphics[width=6cm]{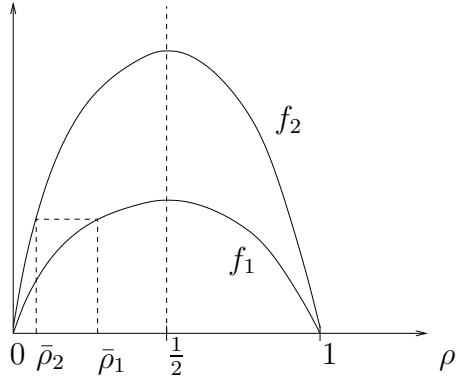}
    \end{psfrags}
    \caption{The situation in the example of Remark~\ref{rmk:f1f2}.}
    \label{fig:remark_f1f2}
  \end{figure}
\end{remark}

%
%
\subsection{Nodes with $n = m = 2$}
\label{sse:n=m=2}

Consider a Riemann solver $\Rr$ for a node $J$ with two incoming and
two outgoing arcs.
In this subsection, we
assume that $(\brho_1,\brho_2,\brho_3,\brho_4)$ denotes an equilibrium for
$\Rr$. Recall that the equilibrium must satisfy $\f 1+\f 2=\f 3+\f 4$.
By symmetry, we may assume also that
\begin{description}
\item[(H1)]  $\brho_1 \le \brho_2$ and $\brho_3 \le \brho_4$.
\end{description}
The results of this subsection are summarized in Table~\ref{tab:equilibrium}.

%
%
\begin{prop}\label{prop:0bad}
  Assume (H1) and that every $\brho_l$ ($l\in\{1,2,3,4\}$) is a good datum.
  \begin{enumerate}
  \item If $\Rr$ satisfies the entropy condition (E1), then
    $\brho_1=\brho_2=\brho_3=\brho_4=\sigma$.

  \item If $\brho_1=\brho_2=\brho_3=\brho_4=\sigma$, then
    $\mathcal F(\bar\rho_1,\brho_2,\brho_3,\brho_4,k)=0$,
    for every $k\in[0,1]$.
  \end{enumerate}
\end{prop}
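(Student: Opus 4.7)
The plan is to handle Part 2 by a direct substitution and to establish Part 1 in three steps, each exploiting a careful choice of $k\in[0,1]$ in the entropy inequality (E1).

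For Part 2 I would simply substitute $\brho_1=\brho_2=\brho_3=\brho_4=\sigma$ into the defining formula~(\ref{eq:entropy_flux_function}) for $\mathcal F$. Each of the four summands equals $\sgn(\sigma-k)(f(\sigma)-f(k))$; two enter with sign $+$ (incoming arcs) and two with sign $-$ (outgoing arcs), so they cancel pairwise and $\mathcal F(\sigma,\sigma,\sigma,\sigma,k)\equiv 0$.

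For Part 1, the interpretation of ``good datum'' consistent with the definitions of $\Omega_i$ and $\Omega_j$ (full flux range $[0,f(\sigma)]$ available in each arc) is $\brho_1,\brho_2\in[\sigma,1]$ and $\brho_3,\brho_4\in[0,\sigma]$. Combined with (H1) this reads $\sigma\le\brho_1\le\brho_2$ and $\brho_3\le\brho_4\le\sigma$.

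\emph{Step 1 ($\brho_3=\brho_4$).} Argue by contradiction: if $\brho_3<\brho_4$, pick $k$ with $\brho_3<k<\brho_4$. Then $k<\sigma\le\brho_1,\brho_2$, so $\sgn(\brho_i-k)=+1$ for $i\in\{1,2,4\}$ and $\sgn(\brho_3-k)=-1$. Expanding $\mathcal F$ and substituting the conservation identity $f(\brho_1)+f(\brho_2)=f(\brho_3)+f(\brho_4)$ collapses everything to
\begin{displaymath}
  \mathcal F(\brho_1,\brho_2,\brho_3,\brho_4,k)=2\bigl(f(\brho_3)-f(k)\bigr),
\end{displaymath}
which is strictly negative since $f$ is strictly increasing on $[0,\sigma]$ and $\brho_3<k\le\sigma$, contradicting (E1).

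\emph{Step 2 ($\brho_1=\brho_2$).} The symmetric argument: if $\brho_1<\brho_2$, pick $k$ with $\brho_1<k<\brho_2$, forcing $k>\sigma$. A parallel reduction using conservation yields $\mathcal F=2(f(\brho_2)-f(k))$, which is strictly negative by strict monotonicity of $f$ on $[\sigma,1]$, again contradicting (E1).

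\emph{Step 3 (conclude $\alpha=\beta=\sigma$).} Write $\alpha:=\brho_1=\brho_2$ and $\beta:=\brho_3=\brho_4$; conservation gives $f(\alpha)=f(\beta)$. Taking $k=\sigma$ in (E1) and computing the four signed summands produces
\begin{displaymath}
  \mathcal F=4\bigl(f(\alpha)-f(\sigma)\bigr).
\end{displaymath}
Since $f(\sigma)$ is the strict maximum of $f$, the inequality $\mathcal F\ge0$ forces $f(\alpha)=f(\sigma)$, hence $\alpha=\sigma$ by the uniqueness clause in $(\mathcal F)$; the relation $f(\beta)=f(\alpha)=f(\sigma)$ and $\beta\le\sigma$ then give $\beta=\sigma$.

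The main obstacle is purely bookkeeping of the $\sgn(\brho_l-k)$ factors: the ordering assumption (H1) together with the good-datum hypothesis is precisely what pins these signs down unambiguously for each $k$ chosen in Steps 1--3 and permits the clean cancellation through the conservation identity, leaving a one-term expression whose sign is decided by the strict monotonicity of $f$ on the appropriate half of $[0,1]$.
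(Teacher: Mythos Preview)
Your proof is correct and follows essentially the same approach as the paper: both arguments exploit the ordering $\brho_3\le\brho_4\le\sigma\le\brho_1\le\brho_2$, test (E1) with $k$ between $\brho_3,\brho_4$ (resp.\ $\brho_1,\brho_2$) to force equality of the outgoing (resp.\ incoming) pair, and then use $k=\sigma$ (the paper uses the whole interval $[\brho_4,\brho_1]\ni\sigma$) to conclude that the common value is~$\sigma$. Your contradiction framing and the paper's direct ``$f(k)\le f(\brho_3)$ for all $k$ in the interval'' framing are equivalent reformulations of the same computation.
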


\begin{proof}
  Since all the data are good, then 
  $\brho_3\le\brho_4\le\sigma\le\brho_1\le\brho_2$.

  If $k\in[\brho_3,\brho_4]$, then \entropia becomes
  \begin{displaymath}
    \f1+\f2-2\ff k\ge \f4-\f3, 
  \end{displaymath}
  which is equivalent to $\ff k\le\f3$. This implies that $\f4=\f3$ and so
  $\brho_3=\brho_4$.

  If $k\in[\brho_1,\brho_2]$, then in the same way we deduce that
  $\brho_1=\brho_2$.

  Finally, if $k\in[\brho_4,\brho_1]$, then (\ref{eq:entropy_RS_E1}),
  coupled with the previous results, becomes
  \begin{displaymath}
    2\f1-2\ff k\ge2\ff k-2\f4,
  \end{displaymath}
  which is equivalent to $\ff k\le\f1$. Therefore $\brho_1=\sigma$ and the
  conclusion follows.
\end{proof}

%
%
\begin{prop}\label{prop:1bad}
  Assume (H1) and that the equilibrium $(\brho_1,\brho_2,\brho_3,\brho_4)$ for
  $\Rr$ is composed by three good data and one bad datum.
  \begin{enumerate}
  \item Assume that the bad datum is in an incoming arc,
    say $\bar\rho_1<\sigma$.\\
    If $\Rr$ satisfies (E1), then $\brho_2=\sigma$ and both 
    $\brho_3$ and $\brho_4$ belong to $[\brho_1,\sigma]$.\\
    If $\brho_2=\sigma$ and both 
    $\brho_3$ and $\brho_4$ belong to $[\brho_1,\sigma]$, then
    $\entropy{\srho,k}$ for every $k\in[0,1]$.

  \item Assume that the bad datum is in an outgoing arc,
    say $\bar \rho_4 > \sigma$.\\
    If $\Rr$ satisfies (E1), then $\brho_3 = \sigma$ and both
    $\brho_1$ and $\brho_2$ belong to $[\sigma,\brho_4]$.\\
    If $\brho_3 = \sigma$ and both $\brho_1$ and $\brho_2$ belong to
    $[\sigma,\brho_4]$, then $\entropy{\srho,k}$ for every $k \in [0,1]$.
  \end{enumerate}
\end{prop}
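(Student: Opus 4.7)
The plan is to prove both parts by the same strategy: for the forward direction, test \entropia at two strategic values of $k$; for the converse, partition $[0,1]$ into a few subintervals and verify $\entropy{\srho,k}$ on each. I sketch part~(1); part~(2) is strictly analogous, with the roles of incoming and outgoing arcs swapped.

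\emph{Forward direction, part~(1).} I first test $k=\sigma$. Under the sign assumptions $\brho_1<\sigma$, $\brho_2\ge\sigma$, $\brho_3,\brho_4\le\sigma$, every entropy term simplifies, and applying the equilibrium relation $\f1+\f2=\f3+\f4$ collapses $\mathcal F(\srho,\sigma)$ to $2(\f2-f(\sigma))$. Since $\f2\le f(\sigma)$, the requirement $\mathcal F\ge 0$ forces $\f2=f(\sigma)$, and combined with $\brho_2\ge\sigma$ this yields $\brho_2=\sigma$. Next I show $\brho_3\ge\brho_1$, which together with $\brho_3\le\brho_4\le\sigma$ gives $\brho_3,\brho_4\in[\brho_1,\sigma]$. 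Arguing by contradiction from $\brho_3<\brho_1$, I split into the sub-cases $\brho_4<\brho_1$ and $\brho_4\ge\brho_1$, and choose $k$ in $(\brho_4,\brho_1)\subset(0,\sigma)$, respectively in $(\brho_3,\brho_1)\subset(0,\sigma)$. Substituting the corresponding signs into \eqref{eq:entropy_flux_function} and using the equilibrium identity, the inequality $\mathcal F\ge 0$ reduces to $\ff k\le(\f3+\f4)/2$ in the first sub-case and to $\ff k\le\f3$ in the second. Both are contradicted by the strict monotonicity of $f$ on $[0,\sigma]$, since $k$ is strictly larger than $\brho_4$ in the first case and than $\brho_3$ in the second.

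\emph{Converse, part~(1).} Given $\brho_2=\sigma$ and $\brho_1\le\brho_3\le\brho_4\le\sigma$, I partition $[0,1]$ into $[0,\brho_1]$, $[\brho_1,\brho_3]$, $[\brho_3,\brho_4]$, $[\brho_4,\sigma]$, $[\sigma,1]$ and check $\mathcal F(\srho,k)\ge 0$ on each. On the two outer intervals the four signs in \eqref{eq:entropy_flux_function} are constant and $\mathcal F$ collapses to $0$ by the equilibrium identity. On the three middle intervals, substituting the appropriate signs and invoking $\f1+f(\sigma)=\f3+\f4$ reduces $\mathcal F$ to $2(\ff k-f(\brho_1))$, $2(f(\brho_3)-f(\brho_1))$, and $2(f(\sigma)-\ff k)$ respectively, each non-negative by the monotonicity of $f$ on $[0,\sigma]$.

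\emph{Part~(2).} The mirror argument applies. Testing $k=\sigma$ with the sign pattern $\brho_1,\brho_2\ge\sigma$, $\brho_3\le\sigma$, $\brho_4>\sigma$ yields $\mathcal F(\srho,\sigma)=2(f(\brho_3)-f(\sigma))$, forcing $\brho_3=\sigma$. Then, to establish $\brho_2\le\brho_4$, I test $k\in(\brho_4,\brho_2)$, splitting according to whether $\brho_1\le\brho_4$ or $\brho_1>\brho_4$, and reach contradictions from the strict monotonicity of $f$ on $[\sigma,1]$. The converse is obtained by partitioning $[0,1]$ into the natural intervals determined by $\sigma=\brho_3\le\brho_1\le\brho_2\le\brho_4$ and simplifying $\mathcal F$ on each. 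The main obstacle is the sub-case bookkeeping in the forward direction: one must track the signs of $\brho_l-k$ for four indices across overlapping configurations, but once $k$ is chosen correctly the algebra collapses cleanly via the equilibrium identity.
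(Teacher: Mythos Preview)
Your proof is correct and follows essentially the same strategy as the paper: test the entropy inequality at well-chosen values of $k$, simplify the sign pattern, and collapse via the conservation identity $\f1+\f2=\f3+\f4$. The organization differs slightly: the paper first splits into the three possible orderings of $\brho_1,\brho_3,\brho_4$ (cases (a), (b), (c)) and only derives $\brho_2=\sigma$ within case~(a), reducing the other two cases to it; you instead test $k=\sigma$ immediately, which yields $\brho_2=\sigma$ in one stroke regardless of ordering, and then handle the ordering by a short contradiction argument. Your route is a bit cleaner. For the converse, the paper simply says the statement ``easily follows,'' whereas you spell out the five-interval partition and the resulting simplifications; your computations there are correct.

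One small expository point: in part~(2), when you write ``test $k\in(\brho_4,\brho_2)$, splitting according to whether $\brho_1\le\brho_4$ or $\brho_1>\brho_4$,'' note that in the second sub-case $k$ can lie on either side of $\brho_1$; the argument goes through either way (both sign patterns lead to a contradiction), but it would be clearer to say you split on the sign of $\brho_1-k$, or to restrict $k$ to $(\brho_4,\brho_1)$ in that sub-case.
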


\begin{proof}
  First assume that the bad datum is in an incoming arc
  and the Riemann solver satisfies the entropy condition (E1).
  Without loss of
  generality, suppose that $\brho_1<\sigma$, $\brho_2\ge\sigma$ and
  $\brho_3\le\brho_4\le\sigma$. We have three possibilities.
  \begin{description}
  \item[(a)] $\brho_1\le\brho_3\le\brho_4$.

  \item[(b)] $\brho_3\le\brho_1\le\brho_4$.

  \item[(c)] $\brho_3\le\brho_4\le\brho_1$.
  \end{description}

  Consider the case \textbf{(a)}. If $k\in[\brho_1,\brho_3]$, then \entropia
  becomes
  \begin{displaymath}
    \f2-\f1\ge\f3+\f4-2\ff k,
  \end{displaymath}
  equivalent to $\ff k\ge\f1$, which is true.\\
  If $k\in[\brho_4,\brho_2]$, then \entropia becomes
  \begin{displaymath}
    \f2-\f1\ge2\ff k-\f3-\f4,
  \end{displaymath}
  equivalent to $\f2\ge\ff k$, which implies that $\brho_2=\sigma$.\\
  If $k\in[\brho_3,\brho_4]$, then \entropia reads
  \begin{displaymath}
    \f2-\f1\ge\f4-\f3,
  \end{displaymath}
  equivalent to $\ff\sigma\ge\f4$, which is true.

  Consider the case \textbf{(b)}. If $k \in [\brho_3,\brho_1]$, then \entropia
  reads
  \begin{displaymath}
    \f1+\f2-2\ff k\ge\f4-\f3,
  \end{displaymath}
  equivalent to $\f3\ge\ff k$. This implies that $\brho_3=\brho_1$ and so
  we are in the case \textbf{(a)}.

  Consider the case \textbf{(c)}. If $k\in[\brho_3,\brho_4]$, then \entropia
  becomes
  \begin{displaymath}
    \f1+\f2-2\ff k\ge\f4-\f3,
  \end{displaymath}
  equivalent to $\f3\ge\ff k$. This implies that $\brho_3=\brho_4$.\\
  If $k\in[\brho_4,\brho_1]$, then \entropia reads
  \begin{displaymath}
    \f1+\f2-2\ff k\ge2\ff k-2\f4,
  \end{displaymath}
  i.e. $\f4\ge\ff k$. This implies that $\brho_4=\brho_1$ and so we have
  a contradiction since, by case \textbf{(a)},
  $\brho_1=\brho_3=\brho_4<\sigma=\brho_2$ and so $\f1+\f2\ne\f3+\f4$.

  The second statement in the case the bad datum is in an incoming arc
  easily follows.

  Assume now that the bad datum is in an outgoing arc
  and that the Riemann solver satisfies the entropy condition (E1).
  Without loss of
  generality, suppose that $\brho_3 \le \sigma$, $\brho_4 > \sigma$ and
  $\sigma \le \brho_1 \le \brho_2$. We have three possibilities.
  \begin{description}
  \item[(a)] $\brho_1\le\brho_2\le\brho_4$.

  \item[(b)] $\brho_1\le\brho_4\le\brho_2$.

  \item[(c)] $\brho_4\le\brho_1\le\brho_2$.
  \end{description}

  Consider the case \textbf{(a)}. If $k \in [\brho_3,\brho_1]$, then \entropia
  becomes
  \begin{displaymath}
    \f1+\f2-2\ff k\ge\f4-\f3,
  \end{displaymath}
  i.e. $\f3 \ge \ff k$. This implies that $\brho_3 = \sigma$.\\
  If  $k \in [\brho_2,\brho_4]$, then (\ref{eq:entropy_RS_E1}) becomes
  \begin{displaymath}
    2\ff k-\f1-\f2\ge\f4-\f3,
  \end{displaymath}
  equivalent to $\ff k \ge \f4$, which is true.\\
  If  $k\in[\brho_1,\brho_2]$, then (\ref{eq:entropy_RS_E1}) becomes
  \begin{displaymath}
    \f2-\f1\ge\f4-\f3,
  \end{displaymath}
  equivalent to $f(\sigma) = \f3 \ge \f1$, which is true.

  Consider the case \textbf{(b)}. If $k \in [\brho_4,\brho_2]$, then \entropia
  reads
  \begin{displaymath}
    \f2 - \f1 \ge 2 \ff k - \f4 - \f3,
  \end{displaymath}
  which is equivalent to $\f2 \ge \ff k$. Thus we deduce that $\brho_2=\brho_4$
  and so we are in the case \textbf{(a)}.

  Consider the case \textbf{(c)}. If $k \in [\brho_1,\brho_2]$, then \entropia
  reads
  \begin{displaymath}
    \f2-\f1\ge2\ff k-\f4-\f3,
  \end{displaymath}
  equivalent to $\f2\ge\ff k$. This implies that $\brho_1=\brho_2$.\\
  If $k \in [\brho_3,\brho_4]$, then (\ref{eq:entropy_RS_E1}) reads
  \begin{displaymath}
    \f1+\f2-2\ff k\ge\f4-\f3,
  \end{displaymath}
  i.e. $\f3\ge\ff k$ and so $\brho_3=\sigma$.
  Therefore $\brho_1=\brho_2=\brho_3=\brho_4=\sigma$, which is a contradiction.

  The second statement of the item~2 of the Proposition easily follows.
  The proof is finished.
\end{proof}

%
%
\begin{prop}\label{prop:2bad}
  Assume (H1) and that the equilibrium $(\brho_1,\brho_2,\brho_3,\brho_4)$ for
  $\Rr$ is composed by two good and two bad data.
  \begin{enumerate}
  \item Assume that $\brho_2<\sigma$, i.e. the bad data
    are both in the incoming arcs.\\
    If the Riemann solver $\Rr$ satisfies the entropy condition (E1),
    then $\brho_1\le\brho_3\le\brho_4\le\brho_2$.\\
    If $\brho_1\le\brho_3\le\brho_4\le\brho_2 < \sigma$, then
    $\entropy{\srho,k}$ for every $k\in[0,1]$.

  \item Assume that $\brho_3>\sigma$, i.e. the bad data are
    in the outgoing arcs.\\
    If the Riemann solver $\Rr$ satisfies the entropy condition (E1),
    then $\brho_3\le\brho_1\le\brho_2\le\brho_4$.\\
    If $\sigma < \brho_3 \le \brho_1 \le \brho_2 \le \brho_4$, then
    $\entropy{\srho,k}$ for every $k\in[0,1]$.

  \item Assume that $\brho_1<\sigma<\brho_4$, i.e. the bad data
    are in the arcs $I_1$ and $I_4$.\\
    If the Riemann solver $\Rr$ satisfies the entropy condition (E1),
    then $\brho_1\le\brho_3\le\sigma\le\brho_2\le\brho_4$.\\
    If $\brho_1\le\brho_3\le\sigma\le\brho_2\le\brho_4$, then
    $\entropy{\srho,k}$ for every $k\in[0,1]$.
  \end{enumerate}
\end{prop}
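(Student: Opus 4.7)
The plan is to follow the test-values strategy of Propositions~\ref{prop:0bad} and~\ref{prop:1bad}: for each of the three subcases, plug carefully chosen values of $k$ into $\entropy{\srho,k}$ to force the claimed ordering, and then verify the inequality for all $k\in[0,1]$ to get the converse. A unifying device is to introduce $a_l=\f l$, so that point~\ref{enum:1_def_RS} of Definition~\ref{def:Riemann_solver} reads $a_1+a_2=a_3+a_4$, and to observe that in each case one half of the parameter axis produces $\mathcal F\equiv 0$ by conservation. For instance, in case~1, if $k\in[\sigma,1]$, then every $\sgn(\brho_l-k)=-1$ and the four contributions telescope to $-\sum_{i=1}^2\f i+\sum_{j=3}^4\f j=0$. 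So in each case only the opposite half of $[0,1]$ carries information.

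Case~1 (both bad incoming). For $k\in[0,\sigma]$, all $\brho_l$ and $k$ lie on the increasing branch, so $\sgn(\brho_l-k)(\f l-\ff k)=|a_l-b|$ with $b=\ff k\in[0,\ff\sigma]$, and the inequality becomes $|a_1-b|+|a_2-b|\ge|a_3-b|+|a_4-b|$. Testing $b=a_2$ and $b=a_1$, together with $a_1+a_2=a_3+a_4$, forces $a_1\le a_3\le a_4\le a_2$; strict monotonicity of $f$ on $[0,\sigma]$ translates this into $\brho_1\le\brho_3\le\brho_4\le\brho_2$. The converse is then a routine piecewise check on $b$, split at $a_1,a_3,a_4,a_2$. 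Case~2 is entirely symmetric, with $f$ strictly decreasing on $[\sigma,1]$ and $k\in[\sigma,1]$ carrying the information.

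Case~3 (one bad on each side) differs because $\brho_1,\brho_3$ and $\brho_2,\brho_4$ sit on opposite branches of $f$. For $k\in[0,\sigma]$, the terms with $\brho_2,\brho_4\ge\sigma\ge k$ satisfy $\sgn(\brho_l-k)=+1$, so they collapse to $\f l-\ff k$ irrespective of the sign of $\f l-\ff k$; the remaining terms give absolute values as in case~1, and $\mathcal F=|a_1-b|-|a_3-b|+a_2-a_4$. Minimizing the first piece in $b$ yields $a_2-a_4\ge|a_1-a_3|$, which via $a_2-a_4=a_3-a_1$ forces $a_1\le a_3$, i.e.\ $\brho_1\le\brho_3$. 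The symmetric computation with $k\in[\sigma,1]$ gives $\brho_2\le\brho_4$. Together with $\brho_3\le\sigma\le\brho_2$ (automatic from the good-data assumption) this is the claim, and the converse is the two minimization arguments read backwards.

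The main obstacle I anticipate is the bookkeeping in case~3: unlike cases~1 and~2, neither half of the $k$-axis simplifies trivially, and one must track carefully which sign correspondence $\sgn(\brho_l-k)=\sgn(\f l-\ff k)$ holds and which fails — the failure being exactly what the bad data create. Phrasing everything in terms of the flux values $a_l$ and the conservation identity $a_1+a_2=a_3+a_4$ sidesteps the combinatorial enumeration of sub-orderings that would otherwise grow out of the proof of Proposition~\ref{prop:1bad}.
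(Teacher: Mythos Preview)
Your argument is correct and is a genuine reorganisation of the paper's proof. The paper proceeds, in each of the three items, by listing the two orderings compatible with conservation (labelled \textbf{(a)} and \textbf{(b)}), then tests $k$ on each subinterval between consecutive $\brho_l$'s: configuration~\textbf{(a)} is verified interval-by-interval, configuration~\textbf{(b)} is ruled out by a single test value that forces a collapse onto~\textbf{(a)}. Your substitution $a_l=\f l$, $b=\ff k$ replaces this enumeration by a single absolute-value inequality in $b$ (e.g.\ $|a_1-b|+|a_2-b|\ge|a_3-b|+|a_4-b|$ in item~1), whose validity for all $b$ is equivalent to the nesting $[a_3,a_4]\subseteq[a_1,a_2]$ once $a_1+a_2=a_3+a_4$ is used; the necessary direction falls out by evaluating at $b=a_1$ and $b=a_2$, the sufficient direction by the standard piecewise check. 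In item~3 the same device reduces the two half-axes to $|a_1-b|-|a_3-b|+(a_2-a_4)\ge0$ and its mirror, whose minima (at $b=a_1$ and $b=a_4$) give precisely $a_1\le a_3$ and $a_2\ge a_4$. What you gain is a proof that is essentially uniform across the three items and that makes explicit why only half of $[0,1]$ carries information in items~1 and~2; what the paper's approach gains is that each step is a one-line computation with no auxiliary notation, at the price of more sub-cases.
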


\begin{proof}
  Assume that $\brho_2<\sigma$ and that the Riemann solver satisfies the
  entropy condition (E1). Since there are exactly two bad data, then
  $\brho_4\le\sigma$. The conservation of mass at $J$ implies that we have
  the following possibilities.
  \begin{description}
  \item[(a)] $\brho_1\le\brho_3\le\brho_4\le\brho_2$.

  \item[(b)] $\brho_3\le\brho_1\le\brho_2\le\brho_4$.
  \end{description}

  Consider the case \textbf{(a)}. If $k\in[\brho_1,\brho_3]$, then \entropia
  reads
  \begin{displaymath}
    \f2-\f1\ge\f3+\f4-2\ff k,
  \end{displaymath}
  equivalent to $\ff k\ge\f1$, which is true.\\
  If $k\in[\brho_3,\brho_4]$, then~(\ref{eq:entropy_RS_E1}) becomes
  \begin{displaymath}
    \f2-\f1\ge\f4-\f3, 
  \end{displaymath}
  which clearly holds.\\
  If $k\in[\brho_4,\brho_2]$, then \entropia reads
  \begin{displaymath}
    \f2-\f1\ge2\ff k-\f3-\f4,
  \end{displaymath}
  equivalent to $\f2\ge\ff k$, which is true.

  Consider the case \textbf{(b)}. If $k\in[\brho_3,\brho_1]$, then \entropia
  reads
  \begin{displaymath}
    \f1+\f2-2\ff k\ge\f4-\f3,
  \end{displaymath}
  equivalent to $\f3\ge\ff k$. This implies that $\brho_1=\brho_3$ and
  consequently $\brho_2=\brho_4$.
  The second statement of the item~1 of the Proposition easily follows.

  Assume now that $\brho_3>\sigma$ and the Riemann solver satisfies the
  entropy condition (E1).
  Consequently $\brho_1\ge\sigma$. Since $\f1+\f2=\f3+\f4$,
  we have the following possibilities.
  \begin{description}
  \item[(a)] $\brho_3\le\brho_1\le\brho_2\le\brho_4$.

  \item[(b)] $\brho_1\le\brho_3\le\brho_4\le\brho_2$.
  \end{description}

  Consider the case \textbf{(a)}. If $k\in[\brho_3,\brho_1]$, then \entropia
  reads
  \begin{displaymath}
    \f1+\f2-2\ff k\ge\f4-\f3,
  \end{displaymath}
  equivalent to $\f3\ge\ff k$, which is true.\\
  If $k\in[\brho_1,\brho_2]$, then \entropia
  reads
  \begin{displaymath}
    \f2-\f1\ge\f4-\f3,
  \end{displaymath}
  which clearly holds.\\
  If $k\in[\brho_2,\brho_4]$, then~(\ref{eq:entropy_RS_E1}) reads
  \begin{displaymath}
    2\ff k-\f1-\f2\ge\f4-\f3,
  \end{displaymath}
  equivalent to $\ff k\ge\f4$, which is true.

  Consider the case \textbf{(b)}. If $k\in[\brho_1,\brho_3]$, then \entropia
  reads
  \begin{displaymath}
    \f2-\f1\ge\f3+\f4-2\ff k,
  \end{displaymath}
  equivalent to $\ff k\ge\f1$. This implies $\brho_1=\brho_3$ and so
  $\brho_2=\brho_4$.
  The second statement of the item~2 of the Proposition easily follows.

  Assume now $\brho_1<\sigma<\brho_4$, i.e. the bad data are in the arcs
  $I_1$ and $I_4$, and that the Riemann solver satisfies the entropy 
  condition (E1). We have the following possibilities.
  \begin{description}
  \item[(a)] $\brho_1\le\brho_3\le\sigma\le\brho_2\le\brho_4$.

  \item[(b)] $\brho_3\le\brho_1<\sigma<\brho_4\le\brho_2$.
  \end{description}

  Consider the case \textbf{(a)}. If $k\in[\brho_1,\brho_3]$, then \entropia
  reads
  \begin{displaymath}
    \f2-\f1\ge\f3+\f4-2\ff k,
  \end{displaymath}
  equivalent to $\ff k\ge\f1$, which is true.\\
  If $k\in[\brho_3,\brho_2]$, then (\ref{eq:entropy_RS_E1}) becomes
  \begin{displaymath}
    \f2-\f1\ge\f4-\f3,
  \end{displaymath}
  equivalent to $\f3\ge\f1$, which is true.\\
  If $k\in[\brho_2,\brho_4]$, then \entropia becomes
  \begin{displaymath}
    2\ff k-\f1-\f2\ge\f4-\f3,
  \end{displaymath}
  equivalent to $\ff k\ge\f4$, which is true.

  Consider the case \textbf{(b)}. If $k\in[\brho_3,\brho_1]$, then \entropia
  reads
  \begin{displaymath}
    \f1+\f2-2\ff k\ge\f4-\f3,
  \end{displaymath}
  equivalent to $\f3\ge\ff k$. This implies $\brho_1=\brho_3$ and so
  $\brho_2=\brho_4$.
  The second statement of the item~3 of the Proposition easily follows.

  The proof is finished.
\end{proof}

%
%
\begin{prop}\label{prop:3bad}
  Assume (H1) and that the equilibrium $(\brho_1,\brho_2,\brho_3,\brho_4)$ for
  $\Rr$ is composed by three bad data and one good datum.
  \begin{enumerate}
  \item Assume that $\brho_2\ge\sigma$, i.e. the good datum is
    in an incoming arc.\\
    If the Riemann solver satisfies the entropy condition (E1),
    then $\brho_1<\sigma$, $\brho_3>\sigma$, $\brho_2\le\brho_4$ and
    $\f1\le\max\left\{\f2,\f3\right\}$.\\
    If $\brho_1<\sigma$, $\brho_3>\sigma$, $\brho_2\le\brho_4$ and
    $\f1\le\max\left\{\f2,\f3\right\}$, then
    $\entropy{\srho,k}$ for every $k\in[0,1]$.

  \item Assume that $\brho_3\le\sigma$, i.e. the good datum
    is in an outgoing arc.\\
    If the Riemann solver satisfies the entropy condition (E1),
    then $\brho_2<\sigma$, $\brho_4>\sigma$, $\brho_3\ge\brho_1$ and
    $\f4\le\max\left\{\f2,\f3\right\}$.
    If $\brho_2<\sigma$, $\brho_4>\sigma$, $\brho_3\ge\brho_1$ and
    $\f4\le\max\left\{\f2,\f3\right\}$, then
    $\entropy{\srho,k}$ for every $k\in[0,1]$.
  \end{enumerate}
\end{prop}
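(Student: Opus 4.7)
The plan is to follow the same template used to prove Propositions~\ref{prop:1bad} and~\ref{prop:2bad}: enumerate the orderings of $(\brho_1,\brho_2,\brho_3,\brho_4)$ compatible with (H1) and the data type hypothesis, then on each interval of constancy of the sign functions appearing in $\mathcal F$ rewrite $\entropyinequality$ using the mass-conservation identity $\f1+\f2=\f3+\f4$ and read off either a contradiction or the listed inequalities.

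For item~1, (H1) together with the assumption that the only good datum is $\brho_2$ forces $\brho_1<\sigma\le\brho_2$ and $\sigma<\brho_3\le\brho_4$, so $\brho_1$ is the smallest of the four. Thus only three orderings remain:
(a) $\brho_1\le\brho_2\le\brho_3\le\brho_4$,
(b) $\brho_1\le\brho_3\le\brho_2\le\brho_4$,
(c) $\brho_1\le\brho_3\le\brho_4\le\brho_2$.
In case (c), testing $k\in[\brho_4,\brho_2]\subseteq[\sigma,1]$ and using $\f1+\f2=\f3+\f4$ reduces $\entropyinequality$ to $\f2\ge\ff k$; since $f$ is decreasing on $[\sigma,1]$, this forces $\brho_2=\brho_4$, which collapses (c) into (b). Hence $\brho_2\le\brho_4$ in every surviving case. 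To extract $\f1\le\max\{\f2,\f3\}$, I test the interval that connects the incoming bad datum to the next $\brho$ on the right, namely $k\in[\brho_1,\brho_2]$ in case (a) and $k\in[\brho_1,\brho_3]$ in case (b); after substitution the inequality becomes $\ff k\ge\f1$, which, since $k$ crosses $\sigma$, boils down to $\f2\ge\f1$ in case (a) and $\f3\ge\f1$ in case (b). Because $\brho_2\le\brho_3$ in case (a) gives $\f2\ge\f3$, while $\brho_3\le\brho_2$ in case (b) gives $\f3\ge\f2$, both derived inequalities coincide with $\f1\le\max\{\f2,\f3\}$.

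For the converse direction in item~1, I fix one of the orderings (a) or (b) and check $\entropyinequality$ on every $k$-subinterval determined by the $\brho_l$. Using $\f1+\f2=\f3+\f4$, each such inequality reduces to a comparison of $\ff k$ with one of the $\f l$, which is settled by the monotonicity of $f$ on $[0,\sigma]$ and $[\sigma,1]$ together with the assumed ordering. The only places where a nontrivial hypothesis enters are the interval touching $\brho_1$, settled by $\f1\le\max\{\f2,\f3\}$, and the interval $[\brho_3,\brho_2]$ in case (b), settled by $\brho_2\le\brho_4$ via $\f2\ge\f4$. Item~2 follows by the completely symmetric argument obtained from the involution $\rho\mapsto\tau(\rho)$ that exchanges incoming/outgoing roles: the relevant orderings become $\brho_3\le\brho_1\le\brho_2\le\brho_4$ and its variants, and the analogous case analysis forces $\brho_3\ge\brho_1$ and $\f4\le\max\{\f2,\f3\}$.

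I expect no conceptual obstacle; the difficulty is purely bookkeeping, because each ordering produces four or five sign patterns for $\mathcal F$ and each must be simplified by the mass identity before monotonicity of $f$ is invoked. Care is needed to confirm that the weaker inequalities produced in the "easy" intervals are automatic consequences of the ordering, so that the entire strength of the hypothesis is concentrated in the two inequalities of the statement.
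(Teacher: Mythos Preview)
Your proposal is correct and follows essentially the same approach as the paper: the same three orderings are enumerated in item~1, case~(c) is collapsed into case~(b) by testing $k\in[\brho_4,\brho_2]$, and the remaining intervals are checked via mass conservation and monotonicity of $f$ to isolate $\f1\le\max\{\f2,\f3\}$. The only minor imprecision is your appeal to the involution $\rho\mapsto\tau(\rho)$ for item~2; the paper simply writes out the symmetric case analysis explicitly (with orderings $\brho_1\le\brho_2\le\brho_3$, $\brho_1\le\brho_3\le\brho_2$, $\brho_3\le\brho_1\le\brho_2$, the last collapsing), and the symmetry you have in mind is really the one exchanging incoming and outgoing arcs rather than $\tau$ itself, but this does not affect the validity of your plan.
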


\begin{proof}
  Assume first that $\brho_2\ge\sigma$ and that the Riemann solver satisfies
  the entropy condition (E1). We easily deduce that
  $\brho_1<\sigma<\brho_3\le\brho_4$. We have the following possibilities.
  \begin{description}
  \item[(a)] $\brho_1<\sigma\le\brho_2\le\brho_3\le\brho_4$.

  \item[(b)] $\brho_1<\sigma<\brho_3\le\brho_2\le\brho_4$.

  \item[(c)] $\brho_1<\sigma<\brho_3\le\brho_4\le\brho_2$.
  \end{description}

  Consider the case \textbf{(a)}. If $k\in[\brho_1,\brho_2]$, then \entropia
  reads
  \begin{displaymath}
    \f2-\f1\ge\f3+\f4-2\ff k,
  \end{displaymath}
  equivalent to $\ff k\ge\f1$. This implies that $\f2\ge\f1$.\\
  If $k\in[\brho_2,\brho_3]$, then~(\ref{eq:entropy_RS_E1}) becomes
  \begin{displaymath}
    2\ff k-\f1-\f2\ge\f3+\f4-2\ff k,
  \end{displaymath}
  equivalent to $2\ff k\ge\f3+\f4$, which is true.\\
  If $k\in[\brho_3,\brho_4]$, then \entropia reads
  \begin{displaymath}
    2\ff k-\f1-\f2\ge\f4-\f3,
  \end{displaymath}
  equivalent to $\ff k\ge\f4$, which is true.

  Consider the case \textbf{(b)}. If $k\in[\brho_1,\brho_3]$, then \entropia
  reads
  \begin{displaymath}
    \f2-\f1\ge\f3+\f4-2\ff k,
  \end{displaymath}
  equivalent to $\ff k\ge\f1$. This implies that $\f3\ge\f1$.\\
  If $k\in[\brho_3,\brho_2]$, then~(\ref{eq:entropy_RS_E1}) becomes
  \begin{displaymath}
    \f2-\f1\ge\f4-\f3,
  \end{displaymath}
  equivalent to $\f3\ge\f1$.\\
  If $k\in[\brho_2,\brho_4]$, then \entropia reads
  \begin{displaymath}
    2\ff k-\f1-\f2\ge\f4-\f3,
  \end{displaymath}
  equivalent to $\ff k\ge\f4$, which is true.

  Consider the case \textbf{(c)}. If $k\in[\brho_4,\brho_2]$, then \entropia
  reads
  \begin{displaymath}
    \f2-\f1\ge2\ff k-\f3-\f4,
  \end{displaymath}
  equivalent to $\f2\ge\ff k$. This implies that $\brho_2=\brho_4$ and so we
  are in the case \textbf{(b)}.
  The second statement in the item~1 of the Proposition easily follows.

  Assume now that $\brho_3\le\sigma$ and that the Riemann solver
  satisfies the entropy condition (E1). We easily deduce that
  $\brho_1\le\brho_2<\sigma<\brho_4$. We have the following possibilities.
  \begin{description}
  \item[(a)] $\brho_1\le\brho_2\le\brho_3\le\sigma<\brho_4$.

  \item[(b)] $\brho_1\le\brho_3\le\brho_2<\sigma<\brho_4$.

  \item[(c)] $\brho_3\le\brho_1\le\brho_2<\sigma<\brho_4$.
  \end{description}

  Consider the case \textbf{(a)}. If $k\in[\brho_1,\brho_2]$, then \entropia
  reads
  \begin{displaymath}
    \f2-\f1\ge\f3+\f4-2\ff k,
  \end{displaymath}
  equivalent to $\ff k\ge\f1$, which is true.\\
  If $k\in[\brho_2,\brho_3]$, then~(\ref{eq:entropy_RS_E1}) becomes
  \begin{displaymath}
    2\ff k-\f1-\f2\ge\f3+\f4-2\ff k,
  \end{displaymath}
  equivalent to $2\ff k\ge\f1+\f2$, which is true.\\
  If $k\in[\brho_3,\brho_4]$, then \entropia reads
  \begin{displaymath}
    2\ff k-\f1-\f2\ge\f4-\f3,
  \end{displaymath}
  equivalent to $\ff k\ge\f4$. This implies that $\f3\ge\f4$.

  Consider the case \textbf{(b)}. If $k\in[\brho_1,\brho_3]$, then \entropia
  reads
  \begin{displaymath}
    \f2-\f1\ge\f3+\f4-2\ff k,
  \end{displaymath}
  equivalent to $\ff k\ge\f1$, which is true.\\
  If $k\in[\brho_3,\brho_2]$, then~(\ref{eq:entropy_RS_E1}) becomes
  \begin{displaymath}
    \f2-\f1\ge\f4-\f3,
  \end{displaymath}
  equivalent to $\f3\ge\f1$, which is true.\\
  If $k\in[\brho_2,\brho_4]$, then \entropia reads
  \begin{displaymath}
    2\ff k-\f1-\f2\ge\f4-\f3,
  \end{displaymath}
  equivalent to $\ff k\ge\f4$. This implies that $\f2\ge\f4$.

  Consider the case \textbf{(c)}. If $k\in[\brho_3,\brho_1]$, then \entropia
  reads
  \begin{displaymath}
    \f1+\f2-2\ff k\ge\f4-\f3,
  \end{displaymath}
  equivalent to $\f3\ge\ff k$. This implies that $\brho_1=\brho_3$ and so we
  are in the case \textbf{(b)}.
  The second statement in the item~2 of the Proposition easily follows.

  The proof is finished.
\end{proof}

%
%
\begin{prop}\label{prop:4bad}
  Assume (H1) and that the equilibrium $(\brho_1,\brho_2,\brho_3,\brho_4)$ for
  $\Rr$ is composed by four bad data.
  If the Riemann solver satisfies the entropy condition (E1), then
  $\brho_1 \le \brho_2 < \sigma < \brho_3 \le \brho_4$.
  Moreover, if $\brho_1 \le \brho_2 < \sigma < \brho_3 \le \brho_4$,
  then $\entropy{\srho,k}$ for every $k\in[0,1]$.
\end{prop}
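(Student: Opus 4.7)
The plan is to establish the two implications of the proposition separately. The forward direction (from entropy admissibility to the ordering) is essentially a bookkeeping exercise: hypothesis (H1) already gives $\bar\rho_1 \le \bar\rho_2$ and $\bar\rho_3 \le \bar\rho_4$, while the assumption that every $\bar\rho_l$ is a bad datum translates, by the conventions used throughout the section, to $\bar\rho_1, \bar\rho_2 < \sigma$ for the two incoming arcs and $\bar\rho_3, \bar\rho_4 > \sigma$ for the two outgoing ones. Concatenating these inequalities yields the chain $\bar\rho_1 \le \bar\rho_2 < \sigma < \bar\rho_3 \le \bar\rho_4$ without actually invoking (E1).

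The substantive content is the converse. Assuming the ordering together with the conservation identity $f(\bar\rho_1)+f(\bar\rho_2)=f(\bar\rho_3)+f(\bar\rho_4)$ from item~\ref{enum:1_def_RS} of Definition~\ref{def:Riemann_solver}, I would verify $\mathcal F(\bar\rho_1,\bar\rho_2,\bar\rho_3,\bar\rho_4,k)\ge 0$ for every $k\in[0,1]$. Since the $\sgn(\bar\rho_l-k)$ factors only switch as $k$ crosses the values $\bar\rho_l$, it suffices to analyze the five subintervals $[0,\bar\rho_1]$, $[\bar\rho_1,\bar\rho_2]$, $[\bar\rho_2,\bar\rho_3]$, $[\bar\rho_3,\bar\rho_4]$, $[\bar\rho_4,1]$.

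On the two extreme intervals all signs align, and the expression for $\mathcal F$ collapses to $\pm(f(\bar\rho_1)+f(\bar\rho_2)-f(\bar\rho_3)-f(\bar\rho_4))=0$ by conservation. On $[\bar\rho_1,\bar\rho_2]$ the sign configuration plus conservation reduces $\mathcal F$ to $2(f(k)-f(\bar\rho_1))$, which is nonnegative because $f$ is increasing on $[0,\sigma]$ and $k\ge\bar\rho_1$. The interval $[\bar\rho_3,\bar\rho_4]$ is symmetric: a parallel computation yields $\mathcal F=2(f(k)-f(\bar\rho_4))\ge 0$, using that $f$ is decreasing on $[\sigma,1]$ and $k\le\bar\rho_4$.

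The central interval $[\bar\rho_2,\bar\rho_3]$ is the only case that requires more care, since it contains the sonic point $\sigma$. Here the expression simplifies, again via conservation, to $\mathcal F=4f(k)-2(f(\bar\rho_1)+f(\bar\rho_2))$, and I would split according to whether $k\in[\bar\rho_2,\sigma]$ or $k\in[\sigma,\bar\rho_3]$. On the left half, monotonicity of $f$ on $[0,\sigma]$ gives $f(k)\ge f(\bar\rho_2)\ge f(\bar\rho_1)$, so $2f(k)\ge f(\bar\rho_1)+f(\bar\rho_2)$. On the right half, monotonicity of $f$ on $[\sigma,1]$ gives $f(k)\ge f(\bar\rho_3)\ge f(\bar\rho_4)$, and one concludes via $2f(k)\ge f(\bar\rho_3)+f(\bar\rho_4)=f(\bar\rho_1)+f(\bar\rho_2)$. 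The only mildly delicate point is precisely this central subinterval, and it is disposed of by the two-case split combined with the conservation identity; everything else is routine sign-bookkeeping.
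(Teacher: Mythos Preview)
Your proof is correct and follows essentially the same approach as the paper's: both verify the entropy inequality by tracking the sign configuration of $\sgn(\bar\rho_l-k)$ on the subintervals determined by the $\bar\rho_l$. You are slightly more explicit than the paper, which omits the two extreme intervals (where $\mathcal F$ vanishes by conservation) and simply asserts the inequality on $[\bar\rho_2,\bar\rho_3]$ without your two-case split around $\sigma$, but the substance is identical.
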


\begin{proof}
  It is sufficient to check the entropy condition (E1).
  If $k\in[\brho_1,\brho_2]$, then \entropia reads
  \begin{displaymath}
    \f2-\f1\ge\f3+\f4-2\ff k,
  \end{displaymath}
  equivalent to $\ff k\ge\f1$, which is true.\\
  If $k\in[\brho_2,\brho_3]$, then~(\ref{eq:entropy_RS_E1}) becomes
  \begin{displaymath}
    2\ff k-\f1-\f2\ge\f3+\f4-2\ff k,
  \end{displaymath}
  equivalent to $2\ff k\ge\f3+\f4$, which is true.
  If $k\in[\brho_3,\brho_4]$, then \entropia reads
  \begin{displaymath}
    2\ff k-\f1-\f2\ge\f4-\f3,
  \end{displaymath}
  equivalent to $\ff k\ge\f4$, which is true. This concludes the proof.
\end{proof}

\begin{table}[t]
  \centering
  \begin{tabular}{|c|l|}
    \hline
    \small{Bad data} & \hspace{2.5cm}admissible configurations\\
    \hline \hline
    $0$ & $\brho_1 = \brho_2 = \brho_3 = \brho_4 = \sigma$\\
    \hline
    $1$ & $\brho_1 \le \brho_3 \le \brho_4 \le \sigma = \brho_2$,
    \hspace{.5cm} $\brho_1 < \sigma$\\
    \cline{2-2}
    & $\brho_3 = \sigma \le \brho_1 \le \brho_2 \le \brho_4$,
    \hspace{.5cm} $\brho_4 > \sigma$\\
    \hline \hline
    $2$ & $\brho_1 \le \brho_3 \le \brho_4 \le \brho_2 < \sigma$\\
    \cline{2-2}
    & $\sigma < \brho_3 \le \brho_1 \le \brho_2 \le \brho_4$\\
    \cline{2-2}
    & $\brho_1 \le \brho_3 \le \sigma \le \brho_2 \le \brho_4$,
    \hspace{.5cm} $\brho_1 < \sigma < \brho_4$\\
    \hline \hline
    $3$ & $\brho_1 < \sigma < \brho_3 \le \brho_4$,\,\,
    $\sigma \le \brho_2 \le \brho_4$,\,\,
    $\f1 \le \max\left\{ \f2, \f3 \right\}$\\
    \cline{2-2}
    & $\brho_1 \le \brho_2 < \sigma < \brho_4$,\,\,
    $\brho_1 \le \brho_3 \le \sigma$,\,\,
    $\f4 \le \max\left\{ \f2, \f3 \right\}$\\
    \hline \hline
    $4$ & $\brho_1 \le \brho_2 < \sigma < \brho_3 \le \brho_4$\\
    \hline
  \end{tabular}
  \caption{All the possible configurations for an equilibrium
  $(\srho)$ of a $\Rr$ satisfying the entropy condition (E1).
  By symmetry, we assume that $\brho_1 \le \brho_2$ and $\brho_3 \le \brho_4$,
  i.e. (H1) holds.}
  \label{tab:equilibrium}
\end{table}

\begin{remark}
  \label{rmk:RS_E1}
  Note that there exist Riemann solvers satisfying the consistency condition
  and the entropy condition (E1). Here we construct a Riemann
  solver $\Rr$ with such properties.\\
  Consider an initial condition
  $(\rho_{1,0}, \rho_{2,0}, \rho_{3,0}, \rho_{4,0})$.
  Denote with $(\hat \rho_1, \hat \rho_2, \hat \rho_3, \hat \rho_4)$
  the image of the initial condition through $\Rr$, i.e.
  \begin{displaymath}
    (\hat \rho_1, \hat \rho_2, \hat \rho_3, \hat \rho_4) = 
    \Rr (\rho_{1,0}, \rho_{2,0}, \rho_{3,0}, \rho_{4,0})
  \end{displaymath}
  If $h$ is the number of bad initial data, then we define $\Rr$
  according to the following possibilities.
  \begin{description}
  \item[$h=0$.] We put
    $\hat \rho_1 = \hat \rho_2 = \hat \rho_3 = \hat \rho_4 = \sigma$.
    By Proposition~\ref{prop:0bad}, this provides an entropy admissible
    equilibrium. Moreover
    \begin{displaymath}
      \Rr \left( \Rr (\rho_{1,0}, \rho_{2,0}, \rho_{3,0}, \rho_{4,0}) \right)
      = \Rr (\rho_{1,0}, \rho_{2,0}, \rho_{3,0}, \rho_{4,0}).
    \end{displaymath}

  \item[$h=1$.] Let $\bar l\in\{1,2,3,4\}$ be such that $\rho_{\bar l,0}$
    is a bad datum.
    We have two possibilities: $\bar l \le 2$ or $\bar l \ge 3$.\\
    Assume first $\bar l\le 2$. We put $\hat \rho_{\bar l} = \rho_{\bar l,0}$
    and $\hat \rho_l = \sigma$ for $l \in\{1,2\}$, $l \ne \bar l$.
    Moreover we define $\hat \rho_3 = \hat \rho_1$ and
    $\hat \rho_4 = \hat \rho_2$.\\
    Assume now $\bar l\ge 3$. We put $\hat \rho_{\bar l} = \rho_{\bar l,0}$
    and $\hat \rho_l = \sigma$ for $l \in\{3,4\}$, $l \ne \bar l$.
    Moreover we define $\hat \rho_1 = \hat \rho_3$ and
    $\hat \rho_2 = \hat \rho_4$.\\
    By Proposition~\ref{prop:1bad}, these solutions provide entropy admissible
    equilibria. Moreover
    \begin{displaymath}
      \Rr \left( \Rr (\rho_{1,0}, \rho_{2,0}, \rho_{3,0}, \rho_{4,0}) \right)
      = \Rr (\rho_{1,0}, \rho_{2,0}, \rho_{3,0}, \rho_{4,0}).
    \end{displaymath}

  \item[$h=2$.] Let $l_1, l_2 \in \{1,2,3,4\}$, $\l_1\ne l_2$, be such that
    $\rho_{l_1,0}$ and $\rho_{l_2,0}$ are bad data.
    We have three different possibilities.\\
    Assume first that $l_1, l_2\in\{1,2\}$. In this case we put
    $\hat \rho_{l_1} = \rho_{l_1,0}$, $\hat \rho_{l_2} = \rho_{l_2,0}$,
    $\hat \rho_3 = \hat \rho_1$ and $\hat \rho_4 = \hat \rho_2$.\\
    Assume now that $l_1, l_2\in\{3,4\}$. In this case we put
    $\hat \rho_{l_1} = \rho_{l_1,0}$, $\hat \rho_{l_2} = \rho_{l_2,0}$,
    $\hat \rho_1 = \hat \rho_3$ and $\hat \rho_2 = \hat \rho_4$.\\
    Consider finally the last case. For simplicity suppose that
    $l_1 = 1$ and $l_2 = 4$.
    We define $\hat \rho_{l_1} = \rho_{l_1,0}$,
    $\hat \rho_{l_2} = \rho_{l_2,0}$, $\hat \rho_2 = \hat \rho_{l_2}$
    and $\hat \rho_3 = \hat \rho_{l_1}$.
    By Proposition~\ref{prop:2bad}, these solutions provide entropy admissible
    equilibria. Moreover
    \begin{displaymath}
      \Rr \left( \Rr (\rho_{1,0}, \rho_{2,0}, \rho_{3,0}, \rho_{4,0}) \right)
      = \Rr (\rho_{1,0}, \rho_{2,0}, \rho_{3,0}, \rho_{4,0}).
    \end{displaymath}

  \item[$h=3$.] Let $\bar l\in\{1,2,3,4\}$ be such that $\rho_{\bar l,0}$
    is a good datum.
    We have two possibilities: $\bar l \le 2$ or $\bar l \ge 3$.\\
    Assume first $\bar l \le 2$; say $\bar l = 2$ for simplicity.\\
    If $f(\rho_{3,0}) + f(\rho_{4,0}) - f(\rho_{1,0}) \in
    \left[\min\left\{  f(\rho_{3,0}), f(\rho_{4,0})\right\},
      f(\sigma)\right]$, then
    we put $\hat \rho_l = \rho_{l,0}$ for every $l \in \{1,2,3,4\}$,
    $l \ne \bar l$ and $\hat \rho_{\bar l}\in [\sigma, 1]$ such that
    $f(\hat \rho_{2}) = f(\rho_{3,0}) + f(\rho_{4,0}) - f(\rho_{1,0})$.\\
    If $f(\rho_{3,0}) + f(\rho_{4,0}) - f(\rho_{1,0}) > f(\sigma)$
    and $f(\rho_{3,0}) \ge f(\rho_{4,0})$, then
    $\hat \rho_1 = \hat \rho_3 =\rho_{1,0}$ and
    $\hat \rho_2 = \hat \rho_4 =\rho_{4,0}$.\\
    If $f(\rho_{3,0}) + f(\rho_{4,0}) - f(\rho_{1,0}) > f(\sigma)$
    and $f(\rho_{3,0}) < f(\rho_{4,0})$, then
    $\hat \rho_2 = \hat \rho_3 =\rho_{3,0}$ and
    $\hat \rho_1 = \hat \rho_4 =\rho_{1,0}$.\\
    If $f(\rho_{3,0}) + f(\rho_{4,0}) - f(\rho_{1,0}) < 
    \min\left\{ f(\rho_{3,0}), f(\rho_{4,0})\right\}$, then
    $\hat \rho_2 = \hat \rho_4 =\rho_{3,0}$ and
    $\hat \rho_1 = \hat \rho_3 =\rho_{4,0}$.

    Assume now $\bar l \ge 3$; say $\bar l = 3$ for simplicity.\\
    If $f(\rho_{1,0}) + f(\rho_{2,0}) - f(\rho_{4,0}) \in
    \left[\min\left\{  f(\rho_{1,0}), f(\rho_{2,0})\right\},
      f(\sigma)\right]$, then
    we put $\hat \rho_l = \rho_{l,0}$ for every $l \in \{1,2,3,4\}$,
    $l \ne \bar l$ and $\hat \rho_{\bar l}\in [0,\sigma]$ such that
    $f(\hat \rho_{3}) = f(\rho_{1,0}) + f(\rho_{2,0}) - f(\rho_{4,0})$.\\
    If $f(\rho_{1,0}) + f(\rho_{2,0}) - f(\rho_{4,0}) > f(\sigma)$
    and $f(\rho_{1,0}) \ge f(\rho_{2,0})$, then
    $\hat \rho_1 = \hat \rho_4 =\rho_{4,0}$ and
    $\hat \rho_2 = \hat \rho_3 =\rho_{2,0}$.\\
    If $f(\rho_{1,0}) + f(\rho_{2,0}) - f(\rho_{4,0}) > f(\sigma)$
    and $f(\rho_{2,0}) > f(\rho_{1,0})$, then
    $\hat \rho_2 = \hat \rho_4 =\rho_{4,0}$ and
    $\hat \rho_1 = \hat \rho_3 =\rho_{1,0}$.\\
    If $f(\rho_{1,0}) + f(\rho_{2,0}) - f(\rho_{4,0}) < 
    \min\left\{ f(\rho_{1,0}), f(\rho_{2,0})\right\}$, then
    $\hat \rho_2 = \hat \rho_4 =\rho_{2,0}$ and
    $\hat \rho_1 = \hat \rho_3 =\rho_{1,0}$.

    By Propositions~\ref{prop:2bad} and~\ref{prop:3bad},
    these solutions provide entropy admissible
    equilibria. Moreover
    \begin{displaymath}
      \Rr \left( \Rr (\rho_{1,0}, \rho_{2,0}, \rho_{3,0}, \rho_{4,0}) \right)
      = \Rr (\rho_{1,0}, \rho_{2,0}, \rho_{3,0}, \rho_{4,0}).
    \end{displaymath}

  \item[$h=4$.] We have some different cases.
    Assume first that $f(\rho_{1,0}) + f(\rho_{2,0}) 
    = f(\rho_{3,0}) + f(\rho_{4,0})$. We put
    $\hat \rho_1 = \rho_{1,0}$, $\hat \rho_2 = \rho_{2,0}$,
    $\hat \rho_3 = \rho_{3,0}$ and $\hat \rho_4 = \rho_{4,0}$.\\
    Assume now that $f(\rho_{1,0}) + f(\rho_{2,0}) < 
    f(\rho_{3,0}) + f(\rho_{4,0})$. For simplicity suppose that
    $f(\rho_{1,0}) \le f(\rho_{2,0})$ and $f(\rho_{3,0}) \ge f(\rho_{4,0})$.\\
    If $f(\rho_{4,0}) > f(\rho_{2,0})$, then we put
    $\hat \rho_1 = \hat \rho_3 = \rho_{1,0}$ and
    $\hat \rho_2 = \hat \rho_4 = \rho_{2,0}$.\\
    If $f(\rho_{4,0}) \le f(\rho_{2,0})$, then we put
    $\hat \rho_1 = \rho_{1,0}$, $\hat \rho_2 = \rho_{2,0}$,
    $\hat \rho_4 = \rho_{4,0}$ and $\hat \rho_3 \in [0,\sigma]$
    such that $f(\hat \rho_3) = f(\hat \rho_1) + f(\hat \rho_2) 
    - f(\hat \rho_4)$. 

    Assume finally that $f(\rho_{1,0}) + f(\rho_{2,0}) > 
    f(\rho_{3,0}) + f(\rho_{4,0})$.
    For simplicity suppose that
    $f(\rho_{1,0}) \le f(\rho_{2,0})$ and $f(\rho_{3,0}) \ge f(\rho_{4,0})$.\\
    If $f(\rho_{1,0}) > f(\rho_{3,0})$, then we put
    $\hat \rho_1 = \hat \rho_3 = \rho_{3,0}$ and
    $\hat \rho_2 = \hat \rho_4 = \rho_{4,0}$.\\
    If $f(\rho_{1,0}) \le f(\rho_{3,0})$, then we put
    $\hat \rho_1 = \rho_{1,0}$, $\hat \rho_3 = \rho_{3,0}$,
    $\hat \rho_4 = \rho_{4,0}$ and $\hat \rho_2 \in [\sigma, 1]$
    such that $f(\hat \rho_2) = f(\hat \rho_3) + f(\hat \rho_4) 
    - f(\hat \rho_1)$. 

    By Propositions~\ref{prop:2bad}, \ref{prop:3bad} and~\ref{prop:4bad},
    these solutions provide entropy admissible
    equilibria. Moreover
    \begin{displaymath}
      \Rr \left( \Rr (\rho_{1,0}, \rho_{2,0}, \rho_{3,0}, \rho_{4,0}) \right)
      = \Rr (\rho_{1,0}, \rho_{2,0}, \rho_{3,0}, \rho_{4,0}).
    \end{displaymath}
  \end{description}
\end{remark}

\begin{remark}
  Another example of Riemann solver satisfying the entropy
  condition (E1) for a node with two incoming and two outgoing arcs 
  is a particular case of the Riemann solver $\Rr_2$,
  defined in Section~\ref{ssec:rs2};
  see Proposition~\ref{prop:rs2_E1}.

  The Riemann solver $\Rr$, constructed in Remark~\ref{rmk:RS_E1},
  differs from the Riemann solver $\Rr_2$. The key difference is that
  a permutation of initial data in incoming (resp. outgoing) arcs
  influences the solution in outgoing (resp. incoming) arcs in the case
  of $\Rr$, but not in the case of $\Rr_2$.\\
  Consider the following example. Let $f(\rho) = 4 \rho (1 - \rho)$
  be the flux. Assume that $\left( \frac14, \frac34, \frac14, \frac14 \right)$
  and $\left( \frac34, \frac14, \frac14, \frac14 \right)$ are two initial
  conditions. In both cases, we have only one bad datum and so,
  using the notation of Remark~\ref{rmk:RS_E1}, $h = 1$. Hence we deduce
  \begin{displaymath}
    \Rr \left( \frac14, \frac34, \frac14, \frac14 \right) =
    \left( \frac14, \frac12, \frac14, \frac12 \right)
  \end{displaymath}
  and
  \begin{displaymath}
    \Rr \left( \frac34, \frac14, \frac14, \frac14 \right) =
    \left( \frac12, \frac14, \frac12, \frac14 \right),
  \end{displaymath}
  while
  \begin{displaymath}
    \Rr_2 \left( \frac14, \frac34, \frac14, \frac14 \right) =
    \Rr_2 \left( \frac34, \frac14, \frac14, \frac14 \right);
  \end{displaymath}
  see Section~\ref{ssec:rs2}.
\end{remark}

%
%
%
%
\section{Examples}
\label{se:examples}

This Section deals with some examples of Riemann solvers, introduced in
literature for describing car and data traffic. For each of them, we
analyze the entropy conditions (E1) and (E2).
First we need some notation.

Consider the set
\begin{equation}
  \label{eq:calA}
  \mathcal A:=\left\{
    \begin{array}{ll}
      A=\{a_{ji}\}_{\substack{i=1,\ldots,n\\ j=n+1,\ldots,n+m}}: &
      \begin{array}{l}
        0 < a_{ji} <   1\,\,      \forall i,j,\\
        \sum\limits_{j=n +1}^{n+m} a_{ji} =1\,\,\forall i
      \end{array}    
    \end{array}
  \right\}.
\end{equation}

Let $\{e_1,\ldots,e_n\}$ be the canonical basis of $\R^n$.
For every $i=1,\ldots,n$, we denote $H_i=\{e_i\}^\bot$.
If $A\in\mathcal A$, then we write,
for every $j=n+1,\ldots,n+m$, $a_j=(a_{j1},\ldots,a_{jn})\in\R^n$
and $H_j=\{a_j\}^\bot$.
Let $\KK$ be the set of indices ${\bf k}=(k_1,...,k_\ell)$,
$1\leq\ell\leq n-1$, such that
$0\leq k_1<k_2<\cdots<k_\ell\leq n+m$ and for every ${\bf k}\in\KK$ define
\begin{equation*}
  H_{\bf k}=\bigcap\limits_{h=1}^\ell H_{k_h}.
\end{equation*}
Writing ${\bf 1}=(1,\ldots,1)\in\R^n$ and
following \cite{C-G-P} we define the set
\begin{equation}
  \label{eq:frakn}
  \mathfrak N:=\left\{A\in\mathcal A:{\bf 1}\notin H_{\bf k}^\bot\,
    \textrm{ for every } {\bf k}\in\KK
\right\}\,.
\end{equation}
Notice that, if $n> m$, then $\mathfrak N=\emptyset$.
The matrices of $\mathfrak N$ will give rise of a unique solution
to the Riemann problem at $J$.

For later use, define the set
\begin{equation}
  \label{eq:theta}
  \Theta=\left\{\boldsymbol{\theta}=\left(\theta_1,\ldots,\theta_{n+m}\right)
    \in\R^{n+m}:\,
    \begin{array}{c}
      \theta_1>0,\cdots,\theta_{n+m}>0,\vspace{.2cm}\\
      \sum_{i=1}^n\theta_i=\sum_{j=n+1}^{n+m}\theta_j=1
    \end{array}
  \right\}\,.
\end{equation}

%
%
\subsection{Riemann Solver $\mathcal{RS}_1$}
In this subsection, we consider the Riemann solver introduced for car
traffic in~\cite{C-G-P}. The construction can be done in the following way.

\begin{enumerate}
\item Fix a matrix $A \in \mathfrak N$ and consider the closed,
  convex and not empty set
  \begin{equation}
    \label{eq:omega}
    \Omega=\left\{
      (\gamma_1,\cdots,\gamma_n)\in\prod_{i=1}^n\Omega_i:
      A\cdot (\gamma_1,\cdots,\gamma_n)^T\in\prod_{j=n+1}^{n+m}\Omega_j
    \right\}\,.
  \end{equation}
  
\item Find the point $(\bar\gamma_1,\ldots,\bar\gamma_n)\in\Omega$
  which maximizes the function 
  \begin{equation}\label{eq:E}
    E(\gamma_1,\ldots,\gamma_n)=\gamma_1+\cdots+\gamma_n,
  \end{equation}
  and define $(\bar\gamma_{n+1},\ldots,\bar\gamma_{n+m})^T
  :=A\cdot(\bar\gamma_1,\ldots,\bar\gamma_n)^T$.
  Since $A\in\mathfrak N$, then
  $(\bar\gamma_1,\ldots,\bar\gamma_n)$ is unique.

\item For every $i\in\{1,\ldots,n\}$, define $\bar\rho_i$
  either by $\rho_{i,0}$
  if $f(\rho_{i,0})=\bar\gamma_i$, or by the solution to
  $f(\rho)=\bar\gamma_i$
  such that $\bar\rho_i\ge\sigma$.
  For every $j\in\{n+1,\ldots,n+m\}$,
  define $\bar\rho_j$ either by $\rho_{j,0}$
  if $f(\rho_{j,0})=\bar\gamma_j$, or by the solution
  to $f(\rho)=\bar\gamma_j$
  such that $\bar\rho_j\le\sigma$.
  Finally, define $\Rr_1:[0,1]^{n+m}\to[0,1]^{n+m}$ by
  \begin{equation}\label{eq:rs1_rho}
    \Rr_1(\rho_{1,0},\ldots,\rho_{n+m,0})
    =(\bar\rho_1,\ldots,\bar\rho_n,\bar\rho_{n+1},\ldots,\bar\rho_{n+m})\,.
  \end{equation}
\end{enumerate}

The following result holds.

\begin{lemma}
  The function defined in~(\ref{eq:rs1_rho}) satisfies the consistency
  condition, in the sense of Definition~\ref{def:consistency}.
\end{lemma}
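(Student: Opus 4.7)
The plan is to reduce consistency to two separate verifications: $(\mathrm{a})$ the vector $(\bar\gamma_1,\ldots,\bar\gamma_n)$ produced by step~2 of the construction is also the maximizer of $E$ for the \emph{new} initial data, and $(\mathrm{b})$ the reconstruction in step~3 returns the same densities.

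Fix initial data $(\rho_{1,0},\ldots,\rho_{n+m,0})$ and set $(\bar\rho_1,\ldots,\bar\rho_{n+m}) := \Rr_1(\rho_{1,0},\ldots,\rho_{n+m,0})$ with fluxes $\bar\gamma_l := f(\bar\rho_l)$. Write $\Omega_l^{(0)}=[0,M_l^{(0)}]$ and $\Omega_l^{(1)}=[0,M_l^{(1)}]$ for the intervals defined by \eqref{eq:omega_i}--\eqref{eq:omega_j} from $\rho_{l,0}$ and from $\bar\rho_l$ respectively, and let $\Omega^{(0)},\Omega^{(1)}$ denote the corresponding polytopes in~\eqref{eq:omega}. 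The goal is $\Rr_1(\bar\rho_1,\ldots,\bar\rho_{n+m})=(\bar\rho_1,\ldots,\bar\rho_{n+m})$.

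The key step is the following dichotomy: $M_l^{(1)}\ge M_l^{(0)}$ for every $l$, and the inequality is strict only if $\bar\gamma_l<M_l^{(0)}$. For an incoming index $i$, inspection of \eqref{eq:omega_i} shows that $M_i^{(0)}\neq M_i^{(1)}$ can occur only when $\rho_{i,0}\le\sigma$ and $\bar\rho_i\ge\sigma$; but step~3 of the construction of $\Rr_1$ then forces $\bar\rho_i\ne\rho_{i,0}$, whence $\bar\gamma_i\ne f(\rho_{i,0})=M_i^{(0)}$, and since $\bar\gamma_i\in\Omega_i^{(0)}$ we get $\bar\gamma_i<M_i^{(0)}$. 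A symmetric argument using \eqref{eq:omega_j} handles outgoing indices. Contrapositively, if $\bar\gamma_l=M_l^{(0)}$ (i.e.\ the constraint is binding at $\bar\gamma$ in $\Omega^{(0)}$) then $M_l^{(1)}=M_l^{(0)}$.

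From $M_l^{(0)}\le M_l^{(1)}$ one has $\Omega^{(0)}\s\Omega^{(1)}$, so $(\bar\gamma_1,\ldots,\bar\gamma_n)\in\Omega^{(1)}$. To conclude $(\mathrm{a})$, observe that $E$ is linear and that every constraint of $\Omega^{(0)}$ active at $\bar\gamma$ (an incoming constraint $\bar\gamma_i=M_i^{(0)}$ or an outgoing one $(A\bar\gamma)_j=M_j^{(0)}$) remains active in $\Omega^{(1)}$ by the dichotomy, while the non-active constraints can only be relaxed. Hence the normal cone to $\Omega^{(1)}$ at $\bar\gamma$ contains the normal cone to $\Omega^{(0)}$ at $\bar\gamma$; since $\bar\gamma$ maximizes $E$ on $\Omega^{(0)}$, the gradient $(1,\ldots,1)$ lies in both cones, so $\bar\gamma$ also maximizes $E$ on $\Omega^{(1)}$, and is the unique such point because $A\in\mathfrak N$. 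For $(\mathrm{b})$, since $f(\bar\rho_l)=\bar\gamma_l$ for every $l$, the ``flux matches'' branch of step~3 of $\Rr_1$ applied with initial data $\bar\rho_l$ returns $\bar\rho_l$ itself, giving the desired equality.

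The main obstacle is the case-by-case dichotomy in the first step; once established, the rest is a short convex-programming observation. I do not expect any subtleties in $(\mathrm{b})$, which follows directly from the reconstruction rule.
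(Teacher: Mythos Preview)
The paper does not actually prove this lemma; it simply refers the reader to \cite{C-G-P,gp-book}. Your argument is a correct, self-contained proof, and the dichotomy on the upper bounds $M_l^{(0)},M_l^{(1)}$ is exactly the right observation.

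One point deserves sharpening. The bare inclusion $\Omega^{(0)}\subseteq\Omega^{(1)}$ gives the \emph{opposite} inclusion of normal cones, namely $N_{\Omega^{(1)}}(\bar\gamma)\subseteq N_{\Omega^{(0)}}(\bar\gamma)$, which is the wrong direction for your purpose. What actually makes step~$(\mathrm a)$ work is that, by your dichotomy, the sets of \emph{active} constraints at $\bar\gamma$ for $\Omega^{(0)}$ and $\Omega^{(1)}$ coincide: an upper-bound constraint active in $\Omega^{(0)}$ (i.e.\ $\bar\gamma_l=M_l^{(0)}$) forces $M_l^{(1)}=M_l^{(0)}$ and hence is active in $\Omega^{(1)}$, while a non-active one has $\bar\gamma_l<M_l^{(0)}\le M_l^{(1)}$ and stays non-active; the lower-bound constraints are identical in both polytopes. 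Since the polyhedral normal cone is the conic hull of the gradients of active constraints, the two normal cones at $\bar\gamma$ are in fact \emph{equal}, and then $(1,\ldots,1)\in N_{\Omega^{(0)}}(\bar\gamma)=N_{\Omega^{(1)}}(\bar\gamma)$ gives the maximality of $\bar\gamma$ on $\Omega^{(1)}$ as you claim. With this clarification the convex-programming step and the verification in~$(\mathrm b)$ are unobjectionable.
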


For a proof, see~\cite{C-G-P,gp-book}. We show that this Riemann
solver does not satisfy neither the entropy condition (E1) nor (E2).

\begin{prop}\label{prop:no_E2_rs1}
  The Riemann solver $\Rr_1$ does not satisfy the entropy condition (E2)
  in the sense of Definition~\ref{def:entropy_RS_E2} and, consequently,
  does not satisfy the entropy condition (E1)
  in the sense of Definition~\ref{def:entropy_RS_E1}.
\end{prop}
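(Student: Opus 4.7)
The second assertion is an immediate consequence of the first: (E1) applied at the single value $k=\sigma$ is nothing but (E2), so any failure of (E2) automatically propagates to (E1). The whole task therefore reduces to exhibiting one initial datum on which $\Rr_1$ violates (E2).

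I would construct the counterexample in the simplest meaningful case, $n=m=2$, with, say, $f(\rho)=\rho(1-\rho)$ (so $\sigma=\tfrac12$ and $f(\sigma)=\tfrac14$) and an asymmetric traffic distribution matrix
\begin{equation*}
A=\begin{pmatrix} a & 1-a\\ 1-a & a\end{pmatrix}\in\mathfrak N,\qquad a\in(\tfrac12,1).
\end{equation*}
As initial data I would take both incoming arcs fully congested, $\rho_{1,0}=\rho_{2,0}=1$, the fourth arc empty, $\rho_{4,0}=0$, and the third arc congested with $\rho_{3,0}\in(\sigma,1)$ chosen so that $(1-a)f(\sigma)<f(\rho_{3,0})<f(\sigma)$. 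In this configuration $\Omega_1=\Omega_2=\Omega_4=[0,f(\sigma)]$ while $\Omega_3=[0,f(\rho_{3,0})]$ is strictly smaller, so the only binding outgoing constraint defining $\Omega$ comes from $I_3$. A direct inspection of the gradient of $E(\gamma_1,\gamma_2)=\gamma_1+\gamma_2$ (the cone generated by the active gradients $(0,1)$ and $(a,1-a)$ contains $(1,1)$ precisely when $a\ge\tfrac12$) shows that the unique maximizer is the corner
\begin{equation*}
\bar\gamma_2=f(\sigma),\qquad
\bar\gamma_1=\frac{f(\rho_{3,0})-(1-a)f(\sigma)}{a}\in(0,f(\sigma)),
\end{equation*}
with $\bar\gamma_3=f(\rho_{3,0})$ and $\bar\gamma_4=(1-a)\bar\gamma_1+a\,f(\sigma)<f(\sigma)$. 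Selecting the branches prescribed by points~\ref{enum:2_def_RS}--\ref{enum:3_def_RS} of Definition~\ref{def:Riemann_solver}, one gets $\bar\rho_2=\sigma$, $\bar\rho_3=\rho_{3,0}>\sigma$, $\bar\rho_1>\sigma$, and $\bar\rho_4<\sigma$.

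The final step is to compute $\mathcal F(\srho,\sigma)$. Because $f(\sigma)$ is the maximum of $f$, each summand at $k=\sigma$ reduces to a signed copy of $f(\sigma)-\bar\gamma_l$, and one gets
\begin{equation*}
\mathcal F(\srho,\sigma)=\bar\gamma_1-\bar\gamma_3+\bar\gamma_4-f(\sigma).
\end{equation*}
Substituting the conservation identity $\bar\gamma_1+\bar\gamma_2=\bar\gamma_3+\bar\gamma_4$ together with $\bar\gamma_2=f(\sigma)$ collapses this to $\mathcal F(\srho,\sigma)=-2\bigl(f(\sigma)-\bar\gamma_4\bigr)<0$, contradicting (E2). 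The construction involves no real technical obstacle; the only care needed is to check that the proposed corner really is the LP maximizer (guaranteed by $a>\tfrac12$) and that $\bar\gamma_4$ is strictly smaller than $f(\sigma)$ (automatic from $a<1$ and $\bar\gamma_1<f(\sigma)$). Once the computation above is carried out, (E2) fails for $\Rr_1$ at this datum, and therefore (E1) fails as well.
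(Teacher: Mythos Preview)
Your argument is correct and follows the same strategy as the paper: exhibit a $2\times 2$ configuration for which the $\Rr_1$ output makes $\mathcal F(\,\cdot\,,\sigma)$ strictly negative. The paper simply picks one concrete matrix $A=\begin{pmatrix}1/3&1/2\\2/3&1/2\end{pmatrix}$ and one numerical initial datum, computes $\bar\gamma_1,\ldots,\bar\gamma_4$ explicitly, and checks the sign. Your version is a little more structural: you use the one-parameter family $A=\begin{pmatrix}a&1-a\\1-a&a\end{pmatrix}$ with $a\in(\tfrac12,1)$, identify the LP corner via the KKT cone condition, and then collapse the entropy via conservation to $-2(f(\sigma)-\bar\gamma_4)<0$. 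This buys you a transparent reason for the sign (the asymmetry $a>\tfrac12$ forces $\bar\gamma_4<f(\sigma)$), whereas the paper's computation is purely numerical. Either way the content is the same counterexample idea, and your KKT check that the corner $(\bar\gamma_1,\bar\gamma_2)=(\tfrac{f(\rho_{3,0})-(1-a)f(\sigma)}{a},\,f(\sigma))$ is the maximizer precisely when $a\ge\tfrac12$ is correct.
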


\begin{proof}
  Consider a node with $2$ incoming and $2$ outgoing arcs,
  the flux function $f(\rho)=4\rho(1-\rho)$, a matrix
  \begin{equation*}
    A=\left(
      \begin{array}{cc}
        \frac13 & \frac12\vspace{.2cm}\\
        \frac23 & \frac12
      \end{array}
    \right)
  \end{equation*}
  and the initial conditions $\rho_{1,0}=\frac34$, $\rho_{2,0}=\frac18$,
  $\rho_{3,0}=\frac{8+\sqrt{34}}{16}$ and $\rho_{4,0}=\frac1{10}$.
  In this case the set $\Omega$ in~(\ref{eq:omega}) is
  \begin{displaymath}
    \left\{(\gamma_1,\gamma_2)\in[0,1]\times\left[0,\frac7{16}\right]:
      0\le\frac{\gamma_1}3+\frac{\gamma_2}2\le\frac{15}{32},\,
      0\le\frac{2\gamma_1}3+\frac{\gamma_2}2\le1\right\}\,;
  \end{displaymath}
  see Figure~\ref{fig:omega_rs1_no_E2}.
  \begin{figure}[h]
    \centering
    \begin{psfrags}
      \psfrag{gamma_1}{$\gamma_1$}
      \psfrag{gamma_2}{$\gamma_2$}
      \psfrag{Insieme Omega}{$\Omega$}
      \includegraphics[width=7cm]{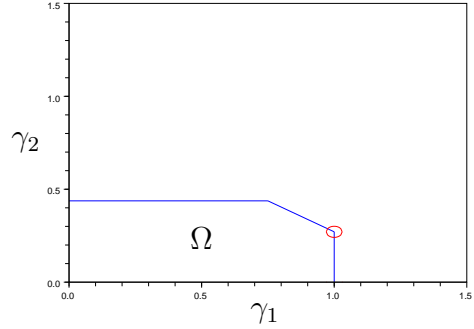}
    \end{psfrags}
    \caption{The set $\Omega$ of Proposition~\ref{prop:no_E2_rs1}.}
    \label{fig:omega_rs1_no_E2}
  \end{figure}
  Therefore we deduce that $\bar\gamma_1=1$, $\bar\gamma_2=\frac{13}{48}$,
  $\bar\gamma_3=\frac{15}{32}$, $\bar\gamma_4=\frac{77}{96}$,
  $\bar\rho_1=\sigma$, $\bar\rho_2>\sigma$, $\bar\rho_3=\rho_{3,0}$ and
  $\bar\rho_4<\sigma$.
  The entropy condition~(\ref{eq:entropy_RS_E2}) in this case becomes
  \begin{displaymath}
    f(\bar\rho_2)-f(\sigma)\ge f(\bar\rho_3)-f(\sigma)+f(\sigma)-f(\bar\rho_4),
  \end{displaymath}
  which is equivalent to
  \begin{displaymath}
    0\le f(\bar\rho_2)-f(\sigma)-f(\bar\rho_3)+f(\bar\rho_4)=
    \frac{13}{48}-1-\frac{15}{32}+\frac{77}{96}=-\frac{19}{48}\,.
  \end{displaymath}
  This concludes the proof.
\end{proof}

The maximization of the function $E$ over $\Omega$, which defines the
Riemann solver $\Rr_1$, is, however, in connection with the maximization
of the entropy $\mathcal F$. In order to explain this fact, let
us introduce some notations.\\
Given $\Omega$ in \eqref{eq:omega}, define
\begin{equation}
  \label{eq:PHI}
  \Phi=\!\left\{
    (\rho_1,\ldots,\rho_{n+m})\in\prod_{l=1}^{n+m}\Phi_l:\!\!
    \begin{array}{l}
      \left(f(\rho_1),\ldots,f(\rho_n)\right)\in\Omega,\\
      \left(
        \begin{array}{c}
          f(\rho_{n+1})\\
          \vdots\\
          f(\rho_{n+m})
        \end{array}
      \right)=A\cdot
      \left(
        \begin{array}{c}
          f(\rho_1)\\
          \vdots\\
          f(\rho_n)
        \end{array}
      \right)\!
    \end{array}
  \right\}
\end{equation}
and the functional
\begin{equation}
  \label{eq:cal_G}
  \begin{array}{ccc}
    \mathcal G: \Phi & \longrightarrow & \R\\
    (\rho_1, \ldots, \rho_{n+m}) & \longmapsto & 
    \mathcal F(\rho_1, \ldots, \rho_{n+m}, \sigma),
  \end{array}
\end{equation}
which is the restriction of $\mathcal F$ on $\Phi \times \{ \sigma \}$.
Note that the set $\Phi$ consists in all the possible solutions at $J$
satisfying Definition~\ref{def:Riemann_solver} and the distribution rule,
determined by the matrix $A \in \mathfrak N$.
It is easy to see that there exists a one to one
correspondence between $\Omega$ and $\Phi$.\\
For every $\mathcal H\s\{1,\ldots,n+m\}$ of cardinality $h$,
with $0\le h\le n-1$, define
\begin{equation}
  \label{eq:Omega_h}
  \Omega_{\mathcal H}=\!\left\{
    (\gamma_1,\ldots,\gamma_n)\in\prod_{i=1}^n\Omega_i:\!
    \begin{array}{l}
      (\gamma_{n+1},\ldots,\gamma_{n+m})^T\!
      =\! A\!\cdot\!(\gamma_1,\ldots,\gamma_n)^T,\\
      (\gamma_{n+1},\ldots,\gamma_{n+m})\in\prod_{j=n+1}^{n+m}\Omega_j,\\
      \gamma_l=\max \Omega_l\quad\textrm{ if }\quad l\in\mathcal H,\\
      \gamma_l<\max \Omega_l\quad\textrm{ if }\quad l\not\in\mathcal H,\\
    \end{array}
  \right\}
\end{equation}
and
\begin{equation}
  \label{eq:PHI_h}
  \Phi_{\mathcal H}=\!\left\{
    (\rho_1,\ldots,\rho_{n+m})\in\prod_{l=1}^{n+m}\Phi_l:\!\!
    \begin{array}{l}
      \left(f(\rho_1),\ldots,f(\rho_n)\right)\in\Omega_{\mathcal H},\\
      \left(
        \begin{array}{c}
          f(\rho_{n+1})\\
          \vdots\\
          f(\rho_{n+m})
        \end{array}
      \right)=A\cdot
      \left(
        \begin{array}{c}
          f(\rho_1)\\
          \vdots\\
          f(\rho_n)
        \end{array}
      \right)\!
    \end{array}
  \right\}.
\end{equation}
Notice that $\OmegaH$ and $\PhiH$ depend on the initial condition
$(\rho_{1,0}, \ldots, \rho_{n+m,0})$ and on the matrix $A \in \mathfrak N$.
There is a one to one correspondence between $\OmegaH$
and $\PhiH$, given by the one-to-one function
\begin{displaymath}
  \begin{array}{ccc}
    \PhiH & \longrightarrow & \OmegaH\\
    (\rho_1, \ldots, \rho_{n+m}) & \longmapsto &
    (f(\rho_1), \ldots, f(\rho_n)).
  \end{array}
\end{displaymath}
Moreover, if $\OmegaH \ne \emptyset$, then $\OmegaH$
has, at most, topological dimension $n - h$.\\
The following proposition holds.

\begin{prop}
  \label{prop:entropy_restricted}
  Let $\mathcal H\s\{1,\ldots,n+m\}$ be a set of cardinality $h$,
  with $0\le h\le n-1$ and suppose that $\Omega_{\mathcal H}\ne\emptyset$.
  The functional $\mathcal G$, restricted to $\PhiH$, is given by
  \begin{equation}
    \label{eq:entropy_restricted}
    \mathcal G(\rho_1, \ldots, \rho_{n+m}) =
    \sum_{l\in\{1,\ldots,n+m\}\setminus\mathcal H}\left[f(\rho_l)-f(\sigma)\right]
    +\sum_{l\in\mathcal H}\left[f(\sigma)-f(\rho_l)\right].
  \end{equation}
\end{prop}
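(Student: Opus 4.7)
The plan is to reduce the statement to a sign bookkeeping exercise on each arc, exploiting the one-to-one correspondence between $\PhiH$ and $\OmegaH$ through $\gamma_l=f(\rho_l)$ together with the selection rule built into $\Phi_l$. The crucial observation is a dictionary between the index set $\mathcal H$ and the position of $\rho_l$ relative to $\sigma$:
\begin{itemize}
\item for an incoming arc $i\in\{1,\ldots,n\}$, $i\in\mathcal H$ (i.e. $\gamma_i=\max\Omega_i$) is equivalent to $\rho_i\le\sigma$, while $i\notin\mathcal H$ forces $\rho_i>\sigma$;
\item for an outgoing arc $j\in\{n+1,\ldots,n+m\}$, $j\in\mathcal H$ is equivalent to $\rho_j\ge\sigma$, while $j\notin\mathcal H$ forces $\rho_j<\sigma$.
\end{itemize}
This dictionary is obtained by splitting on whether $\rho_{l,0}\le\sigma$ or $\rho_{l,0}\ge\sigma$ and inspecting the explicit forms of $\Omega_l$ and $\Phi_l$ in \eqref{eq:omega_i}--\eqref{eq:Phi_j}. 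For instance, for incoming $i$ with $\rho_{i,0}\le\sigma$, the maximum $f(\rho_{i,0})$ of $\Omega_i$ is realized in $\Phi_i$ only at $\rho_i=\rho_{i,0}\le\sigma$, whereas every other value in $\Phi_i$ lies in $]\tau(\rho_{i,0}),1]\subset]\sigma,1]$; and for $\rho_{i,0}\ge\sigma$ the maximum $f(\sigma)$ of $\Omega_i$ corresponds to $\rho_i=\sigma$, all other points in $\Phi_i=[\sigma,1]$ producing $\rho_i>\sigma$. The outgoing arcs are handled by the symmetric argument.

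With this in hand, I would evaluate term by term the defining expression
\begin{equation*}
\mathcal G(\rho_1,\ldots,\rho_{n+m})=\sum_{i=1}^n\sgn(\rho_i-\sigma)(f(\rho_i)-f(\sigma))-\sum_{j=n+1}^{n+m}\sgn(\rho_j-\sigma)(f(\rho_j)-f(\sigma)),
\end{equation*}
using that $f(\rho_l)-f(\sigma)\le0$ for every $\rho_l\in[0,1]$ because $\sigma$ is the global maximum of $f$ from assumption $(\mathcal F)$. For an incoming arc $i\in\mathcal H$ the dictionary gives $\sgn(\rho_i-\sigma)\le0$, so the contribution becomes $f(\sigma)-f(\rho_i)$; for $i\notin\mathcal H$ we have $\sgn(\rho_i-\sigma)=+1$ and the contribution is $f(\rho_i)-f(\sigma)$. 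For an outgoing arc the extra minus sign in front of the $j$-sum flips the roles exactly in the way needed: $j\in\mathcal H$ yields $f(\sigma)-f(\rho_j)$, $j\notin\mathcal H$ yields $f(\rho_j)-f(\sigma)$.

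Summing the four families of contributions and grouping by whether the index lies in $\mathcal H$ or not produces precisely \eqref{eq:entropy_restricted}. I expect the only delicate point to be the boundary case where $\rho_l=\sigma$ (so $\sgn(\rho_l-\sigma)=0$), but here $f(\rho_l)-f(\sigma)=0$ as well, so the corresponding summand is zero under either convention and the formula remains correct. Everything else is a routine verification enabled by the $\mathcal H$-to-sign dictionary described above.
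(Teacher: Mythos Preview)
Your proposal is correct and follows essentially the same approach as the paper: a term-by-term evaluation splitting on whether $l$ is incoming or outgoing and whether $l\in\mathcal H$ or not. The paper simply asserts the value of each of the four types of summands, whereas you additionally justify the underlying sign dictionary from the explicit descriptions \eqref{eq:omega_i}--\eqref{eq:Phi_j}; your handling of the borderline case $\rho_l=\sigma$ is also fine.
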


\begin{proof}
  Fix $(\rho_1\ldots,\rho_{n+m})\in\PhiH$ and $l\in\{1,\ldots,n+m\}$.
  We have some different possibilities.
  \begin{enumerate}
  \item $l\le n$ and $l\in\mathcal H$. In this case the term
    $\sgn(\rho_l-\sigma)\left(f(\rho_l)-f(\sigma)\right)$ becomes
    $f(\sigma)-f(\rho_l)$.

  \item $l\le n$ and $l\not\in\mathcal H$. In this case the term
    $\sgn(\rho_l-\sigma)\left(f(\rho_l)-f(\sigma)\right)$ becomes
    $f(\rho_l)-f(\sigma)$.

  \item $l\ge n+1$ and $l\in\mathcal H$. In this case the term
    $-\sgn(\rho_l-\sigma)\left(f(\rho_l)-f(\sigma)\right)$ becomes
    $f(\sigma)-f(\rho_l)$.

  \item $l\ge n+1$ and $l\not\in\mathcal H$. In this case the term
    $-\sgn(\rho_l-\sigma)\left(f(\rho_l)-f(\sigma)\right)$ becomes
    $f(\rho_l)-f(\sigma)$.
  \end{enumerate}
  Therefore the proof is finished.
\end{proof}

\begin{corollary}
  Let $\mathcal H\s\{1,\ldots,n+m\}$ be a set of cardinality $h$,
  with $0\le h\le n-1$ and suppose that $\Omega_{\mathcal H}\ne\emptyset$.
  The problem of maximizing $\mathcal G$ on the set $\PhiH$
  is equivalent to the problem of maximizing the function $E$,
  defined in~(\ref{eq:E}), on the set $\OmegaH$.  
\end{corollary}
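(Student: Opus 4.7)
The plan is to translate the maximization of $\mathcal G$ on $\PhiH$ into a linear maximization on $\OmegaH$ through the bijection $(\rho_1,\ldots,\rho_{n+m})\mapsto(f(\rho_1),\ldots,f(\rho_n))$ noted in the text, and then to show that, on $\OmegaH$, $\mathcal G$ coincides with $2E$ up to an additive constant. Since a bijection preserves maximizers and an additive constant does not change the argmax, this suffices.

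First I would apply Proposition~\ref{prop:entropy_restricted} to rewrite $\mathcal G$ on $\PhiH$ in the coordinates $\gamma_l := f(\rho_l)$. After regrouping the $f(\sigma)$ terms one obtains
\begin{equation*}
\mathcal G \;=\; \sum_{l \notin \mathcal H} \gamma_l \;-\; \sum_{l \in \mathcal H} \gamma_l \;+\; \bigl(2h-(n+m)\bigr)f(\sigma).
\end{equation*}
By the very definition of $\OmegaH$ in~(\ref{eq:Omega_h}), for every $l \in \mathcal H$ one has $\gamma_l = \max \Omega_l$, a number that depends only on the initial datum and (for $l \ge n+1$) on the matrix $A$. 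Consequently $\sum_{l\in\mathcal H}\gamma_l$ is constant on $\OmegaH$.

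Next I would use the row-sum-one property of matrices in $\mathcal A$, namely $\sum_{j=n+1}^{n+m} a_{ji} = 1$ for each $i$. Combined with the defining relation $(\gamma_{n+1},\ldots,\gamma_{n+m})^T = A\cdot(\gamma_1,\ldots,\gamma_n)^T$ valid on $\OmegaH$, it gives
\begin{equation*}
\sum_{j=n+1}^{n+m}\gamma_j \;=\; \sum_{i=1}^n \gamma_i \sum_{j=n+1}^{n+m}a_{ji} \;=\; \sum_{i=1}^n\gamma_i \;=\; E(\gamma_1,\ldots,\gamma_n),
\end{equation*}
so $\sum_{l=1}^{n+m}\gamma_l = 2E$. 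Substituting $\sum_{l\notin\mathcal H}\gamma_l = 2E - \sum_{l\in\mathcal H}\gamma_l$ into the expression for $\mathcal G$ yields $\mathcal G = 2E + C$, where $C$ depends only on the initial condition, on $A$ and on $\mathcal H$, not on the point of $\OmegaH$. This proves the claim.

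There is essentially no obstacle here: once the correct bookkeeping of the constant terms and the row-sum-one identity are in hand, the corollary follows from pure arithmetic together with the bijection $\PhiH \leftrightarrow \OmegaH$. The only subtlety is recognising that the terms with $l \in \mathcal H$ are truly fixed on $\OmegaH$, even for outgoing indices $l \ge n+1$, because $\OmegaH$ simultaneously fixes $\gamma_l = \max\Omega_l$ and enforces $\gamma_l = \sum_i a_{li}\gamma_i$, so the fixed value is consistent with the linear constraint rather than producing a further restriction beyond $\PhiH$.
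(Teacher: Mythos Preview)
Your argument is correct and follows essentially the same route as the paper's proof: both invoke Proposition~\ref{prop:entropy_restricted}, observe that the terms with $l\in\mathcal H$ are constant on $\OmegaH$, use the column-stochasticity of $A$ to obtain $\sum_{j}\gamma_j=\sum_i\gamma_i=E$, and conclude that $\mathcal G=2E+\text{const}$. Two inconsequential slips: what you call the ``row-sum-one'' property is in fact the column-sum-one property (your displayed identity is the correct one), and $\max\Omega_l$ for $l\ge n+1$ depends only on $\rho_{l,0}$, not on $A$.
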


\begin{proof}
  Notice that, by Proposition~\ref{prop:entropy_restricted}, the function
  $\mathcal G$ on the set $\PhiH$ coincides with
  \begin{displaymath}
    \sum_{l\in\{1,\ldots,n+m\} \setminus \mathcal H} f(\rho_l) + C,
  \end{displaymath}
  where $C$ is a constant, depending on $\mathcal H$ and on the initial
  conditions.
  Indeed, if $l\in\mathcal H$, then $\rho_l$
  is completely determined by the initial condition $\rho_{l,0}$.
  More precisely, $\rho_l$ is equal to $\rho_{l,0}$ when $\rho_{l,0}$ is
  a bad datum, while $\rho_l$ is equal to $\sigma$ in the other case.
  Therefore, if $(\rho_1,\ldots,\rho_{n+m})\in\PhiH$, then we deduce that
  \begin{eqnarray*}
    \mathcal G(\rho_1,\ldots,\rho_{n+m}) & = & 
    \sum_{i\in\{1,\ldots,n\}\setminus\mathcal H} f(\rho_i) +
    \sum_{j\in\{n+1,\ldots,n+m\}\setminus\mathcal H} f(\rho_j) + C\\
    & = & \sum_{i\in\{1,\ldots,n\}\setminus\mathcal H} f(\rho_i) +
    \sum_{j\in\{n+1,\ldots,n+m\}} f(\rho_j) + C_1\\
    & = & \sum_{i\in\{1,\ldots,n\}\setminus\mathcal H} f(\rho_i) +
    \sum_{i\in\{1,\ldots,n\}} f(\rho_j) + C_1\\
    & = & 2\sum_{i\in\{1,\ldots,n\}\setminus\mathcal H} f(\rho_i) + C_2,
  \end{eqnarray*}
  where $C_1$ and $C_2$ are constants. Finally note that the function $E$,
  restricted on $\OmegaH$, is given by
  \begin{displaymath}
    E(\gamma_1,\ldots,\gamma_n)=\sum_{i\in\{1,\ldots,n\}\setminus\mathcal H}
    \gamma_i+C_2-C_1.
  \end{displaymath}
  This completes the proof.
\end{proof}

\begin{remark}
  Note that the set $\Phi$ is, in general, disconnected, while the set
  $\Omega$ is convex and so connected.
  The function $\mathcal G$, defined in~(\ref{eq:cal_G}), i.e. the
  entropy function restricted on $\Phi \times \{ \sigma \}$, is
  continuous, since it has not jumps in each connected component of
  $\Phi$.
  Since there is a bijection between the sets $\Omega$ and $\Phi$,
  then we can consider the entropy function on $\Omega$.
  More precisely, define the function
  \begin{displaymath}
    \begin{array}{ccc}
      \Upsilon: \Omega & \longrightarrow & \Phi\\
      (\gamma_1, \ldots, \gamma_{n}) & \longmapsto &
      (\rho_1, \ldots, \rho_{n+m}),
    \end{array}
  \end{displaymath}
  satisfying $f(\rho_i) = \gamma_i$ for every $i \in \{1, \ldots, n\}$,
  and consider the map $\mathcal G \circ \Upsilon: \Omega \to \R$.
  This map, in general, is discontinuous, since
  it can have jumps at every point
  $(\gamma_1, \ldots, \gamma_n) \in \overline{\Omega}_{\mathcal H_1} \cap
  \overline{\Omega}_{\mathcal H_2}$ with $\mathcal H_1 \ne \mathcal H_2$
  different subsets of $\{1,\ldots,n+m\}$ of cardinalities less than
  or equal to $n-1$.
\end{remark}

%
%
\subsection{Riemann Solver $\mathcal{RS}_2$}\label{ssec:rs2}
In this subsection, we consider the Riemann solver, introduced
in \cite{da-m-p} for data networks; see also \cite{gp-book}. 
The construction can be done in the following way.

  \begin{enumerate}
  \item Fix $\boldsymbol{\theta} \in \Theta$ and define 
    \begin{equation*}
      \Gamma_{inc}=\sum_{i=1}^n\sup\Omega_i,\quad
      \Gamma_{out}=\sum_{j=n+1}^{n+m}\sup\Omega_j,
    \end{equation*}
    then the maximal possible through-flow at the crossing is
    \begin{equation*}
      \Gamma = \min \left\{\Gamma_{inc},\Gamma_{out} \right\}\,.
    \end{equation*}
  
  \item Introduce the closed, convex and not empty sets
    \begin{eqnarray*}
      I & = &
      \left\{ 
        (\gamma_1, \ldots,\gamma_n) \in 
        \prod_{i=1}^n \Omega_i 
        \colon \sum_{i=1}^n \gamma_i = \Gamma 
      \right\}
      \\
      J & = &
      \left\{ 
        (\gamma_{n+1}, \ldots,\gamma_{n+m}) \in 
        \prod_{j=n+1}^{n+m} \Omega_j 
        \colon \sum_{j=n+1}^{n+m} \gamma_j = \Gamma 
      \right\}   \,.
    \end{eqnarray*}

  \item Denote with $(\bar\gamma_1,\ldots,\bar\gamma_n)$ the orthogonal
    projection on the convex set $I$ of the point
    $(\Gamma\theta_1,\ldots,\Gamma\theta_n)$ and with
    $(\bar\gamma_{n+1},\ldots,\bar\gamma_{n+m})$ the
    orthogonal projection
    on the convex set $J$ of the point
    $(\Gamma\theta_{n+1},\ldots,\Gamma\theta_{n+m})$.

  \item For every $i\in\{1,\ldots,n\}$, define $\bar\rho_i$
    either by $\rho_{i,0}$
    if $f(\rho_{i,0})=\bar\gamma_i$, or by the solution to
    $f(\rho)=\bar\gamma_i$
    such that $\bar\rho_i\ge\sigma$.
    For every $j\in\{n+1,\ldots,n+m\}$,
    define $\bar\rho_j$ either by $\rho_{j,0}$
    if $f(\rho_{j,0})=\bar\gamma_j$, or by the solution
    to $f(\rho)=\bar\gamma_j$
    such that $\bar\rho_j\le\sigma$.
    Finally, define $\Rr_2:[0,1]^{n+m}\to[0,1]^{n+m}$ by
  \begin{equation}\label{rs2_rho}
    \Rr_2(\rho_{1,0},\ldots,\rho_{n+m,0})
    =(\bar\rho_1,\ldots,\bar\rho_n,\bar\rho_{n+1},\ldots,\bar\rho_{n+m})\,.
  \end{equation}
\end{enumerate}

The following result holds.

\begin{lemma}
  The function defined in~(\ref{rs2_rho}) satisfies the consistency
  condition
  \begin{equation}\label{eq:stab_rs2}
    \Rr_2(\Rr_2(\rho_{1,0},\ldots,\rho_{n+m,0}))=
    \Rr_2(\rho_{1,0},\ldots,\rho_{n+m,0})
  \end{equation}
  for every $(\rho_{1,0},\ldots,\rho_{n+m,0})\in[0,1]^{n+m}$.
\end{lemma}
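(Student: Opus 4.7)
The plan is to verify the fixed-point property of $\Rr_2$ by running the construction a second time with input $(\bar\rho_1,\ldots,\bar\rho_{n+m})=\Rr_2(\rho_{1,0},\ldots,\rho_{n+m,0})$ and checking that every intermediate object is unchanged. Writing $\bar\gamma_l=f(\bar\rho_l)$ and primed symbols $\Omega_l',\Gamma_{inc}',\Gamma_{out}',\Gamma',I',J'$ for the iterated quantities, the claim reduces to $\bar\gamma_l'=\bar\gamma_l$ and $\bar\rho_l'=\bar\rho_l$ for every $l$.

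First I would establish the monotonicity $\Omega_l\subseteq\Omega_l'$ by a short case analysis on whether each original datum $\rho_{l,0}$ is good or bad and whether the reconstructed $\bar\rho_l$ is good or bad, using~\eqref{eq:omega_i}--\eqref{eq:omega_j} together with the fact that the reconstruction rule only puts $\bar\rho_i\ge\sigma$ on incoming arcs and $\bar\rho_j\le\sigma$ on outgoing arcs unless $\bar\rho_l=\rho_{l,0}$. This gives $\Gamma_{inc}'\ge\Gamma_{inc}$, $\Gamma_{out}'\ge\Gamma_{out}$, and hence $\Gamma'\ge\Gamma$. For the reverse inequality I would treat, without loss of generality, the case $\Gamma=\Gamma_{inc}\le\Gamma_{out}$: then $\sum_i\bar\gamma_i=\Gamma=\sum_i\sup\Omega_i$ forces $\bar\gamma_i=\sup\Omega_i$ for every incoming $i$, so $\bar\rho_i\in\{\rho_{i,0},\sigma\}$ and in each subcase one checks $\sup\Omega_i'=\bar\gamma_i$. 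Therefore $\Gamma_{inc}'=\Gamma$ and $\Gamma'=\Gamma$. In this case $I'$ collapses to the singleton $\{(\bar\gamma_1,\ldots,\bar\gamma_n)\}$, so the incoming projection trivially reproduces $(\bar\gamma_i)$.

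For the outgoing projection onto $J'\supseteq J$, my approach is the first-order (KKT) characterization of orthogonal projection on a convex polytope: $(\bar\gamma_{n+1},\ldots,\bar\gamma_{n+m})$ is the projection of $(\Gamma\theta_{n+1},\ldots,\Gamma\theta_{n+m})$ onto $J'$ if and only if the residual lies in the normal cone of $J'$ at $\bar\gamma$. Since this residual is known to lie in the normal cone of $J$ at $\bar\gamma$ (by definition of the first projection), the relevant mini-lemma to prove is that every active bound of $J$ at $\bar\gamma$ remains an active bound of $J'$ at $\bar\gamma$. Concretely: if $\bar\gamma_j=\sup\Omega_j$, then $\sup\Omega_j'=\sup\Omega_j$; if $\bar\gamma_j=0$, then $0$ is still the lower bound. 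Both are verified by the same case analysis as in Step~1, distinguishing whether $\rho_{j,0}$ is good or bad. Once this is in place, the normal cones at $\bar\gamma$ coincide on the active faces, so the projection is unchanged and $\bar\gamma_j'=\bar\gamma_j$. Finally, the reconstruction of $\bar\rho_l'$ from $\bar\gamma_l'=\bar\gamma_l$ with new initial datum $\bar\rho_l$ returns $\bar\rho_l$ immediately, since $f(\bar\rho_l)=\bar\gamma_l'$ triggers the first branch of the reconstruction rule.

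The hard step is the preservation of active bounds in Step~3, particularly on the outgoing side: one must rule out that the new iteration strictly enlarges $\sup\Omega_j'$ precisely at an index where the projection is pinned against the old bound $\sup\Omega_j$. The delicate subcase is $\rho_{j,0}\le\sigma$ with $\bar\gamma_j=f(\sigma)$, where $\bar\rho_j$ is forced to be $\sigma$ and one must notice that this borderline value still yields $\sup\Omega_j'=f(\sigma)$. Bypassing this obstruction via the KKT viewpoint rather than a direct computation of the projection is what keeps the argument clean and finite, especially when several upper bounds are simultaneously active.
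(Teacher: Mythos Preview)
The paper does not actually prove this lemma: it simply states ``For a proof, see~\cite{g-p_generic_J}.'' So there is no in-paper argument to compare against, and your proposal supplies a genuine direct proof where the paper defers to a reference.

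Your argument is correct. The chain $\Omega_l\subseteq\Omega_l'$, hence $\Gamma'\ge\Gamma$, together with the observation that on the ``saturated'' side (say $\Gamma=\Gamma_{inc}$) one has $\bar\gamma_i=\sup\Omega_i$ and therefore $\sup\Omega_i'=\bar\gamma_i$, forces $\Gamma'=\Gamma$ and collapses $I'$ to a singleton. For the other side, the KKT/normal-cone characterization of projection onto a box--hyperplane polytope is exactly the right tool: the normal cone at $\bar\gamma$ is generated by $\pm\mathbf{1}$ together with $e_j$ for active upper bounds and $-e_j$ for active lower bounds, and your ``active bounds are preserved'' mini-lemma shows these index sets coincide for $J$ and $J'$, so $N_J(\bar\gamma)=N_{J'}(\bar\gamma)$ and the projection is unchanged. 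The borderline case you flag ($\rho_{j,0}\le\sigma$, $\bar\gamma_j=f(\sigma)$, $\bar\rho_j=\sigma$) is handled correctly by the overlap in the definition~\eqref{eq:omega_j} at $\sigma$. The final reconstruction step is immediate since $f(\bar\rho_l)=\bar\gamma_l$ triggers the identity branch. One small point worth making explicit in a write-up: for polyhedra the normal-cone formula you use holds without any constraint qualification, so no regularity hypothesis is hidden there.
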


For a proof, see~\cite{g-p_generic_J}. We prove now that the Riemann solver
$\Rr_2$ satisfies the entropy condition (E2).

\begin{prop}
  Assume $n = m$ and consider a node $J$ with $n$ incoming
  roads and $m$ outgoing roads.
  The Riemann solver $\Rr_2$ satisfies the entropy condition (E2)
  in the sense of Definition~\ref{def:entropy_RS_E2}.
\end{prop}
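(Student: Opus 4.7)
The plan is to reduce to two symmetric cases based on whether $\Gamma = \Gamma_{inc}$ or $\Gamma = \Gamma_{out}$, exploiting the fact that the side attaining the minimum in the definition of $\Gamma$ is fully saturated. Writing $M = f(\sigma)$ and $\bar\gamma_l = f(\bar\rho_l)$, I would first note that $\bar\gamma_l \le M$ so $M - \bar\gamma_l \ge 0$ always, and that $\bar\gamma_l = M$ exactly when $\bar\rho_l = \sigma$. A direct inspection of signs in the definition of $\mathcal F$ yields
\begin{equation*}
\mathcal F(\bar\rho_1,\ldots,\bar\rho_{n+m},\sigma) = \sum_{i:\bar\rho_i<\sigma}(M-\bar\gamma_i) - \sum_{i:\bar\rho_i>\sigma}(M-\bar\gamma_i) + \sum_{j:\bar\rho_j>\sigma}(M-\bar\gamma_j) - \sum_{j:\bar\rho_j<\sigma}(M-\bar\gamma_j),
\end{equation*}
with arcs having $\bar\rho_l = \sigma$ contributing zero.

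By the incoming/outgoing symmetry available when $n = m$, I assume without loss of generality that $\Gamma = \Gamma_{out}$. Then $\sum_j \bar\gamma_j = \Gamma = \sum_j \sup \Omega_j$ together with $\bar\gamma_j \le \sup \Omega_j$ forces $\bar\gamma_j = \sup\Omega_j$ for every outgoing arc. A short case split on $\rho_{j,0}$, using the definition of $\Omega_j$ in (\ref{eq:omega_j}) and the selection rule for $\bar\rho_j$ in the construction of $\Rr_2$, shows that either $\bar\rho_j = \sigma$ (when $\rho_{j,0} \le \sigma$) or $\bar\rho_j = \rho_{j,0} > \sigma$ (when $\rho_{j,0} > \sigma$). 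Consequently $\{j : \bar\rho_j < \sigma\} = \emptyset$, so the fourth sum above vanishes.

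The final step is a counting identity. Since $\bar\gamma_l = M$ precisely on $\{\bar\rho_l = \sigma\}$, the flux balance $\sum_i \bar\gamma_i = \sum_j \bar\gamma_j = \Gamma$ yields
\begin{equation*}
\sum_{i:\bar\rho_i \neq \sigma}(M - \bar\gamma_i) = Mn - \Gamma \qquad \text{and} \qquad \sum_{j:\bar\rho_j>\sigma}(M - \bar\gamma_j) = Mm - \Gamma.
\end{equation*}
Substituting these into the four-term expression and using $n = m$, everything collapses to
\begin{equation*}
\mathcal F(\bar\rho_1,\ldots,\bar\rho_{n+m},\sigma) = 2 \sum_{i:\bar\rho_i<\sigma}(M - \bar\gamma_i) \ge 0,
\end{equation*}
and the symmetric case $\Gamma = \Gamma_{inc}$ gives $\mathcal F = 2\sum_{j:\bar\rho_j>\sigma}(M-\bar\gamma_j) \ge 0$.

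The hard part will be the initial rewriting by signs and the case analysis that eliminates $\{j:\bar\rho_j<\sigma\}$ under saturation; once those are in hand, the algebra is routine. A curious byproduct of the approach is that the argument uses only $\bar\gamma_l \le \sup\Omega_l$ and flux conservation at $J$, so the detailed orthogonal projection structure of $\Rr_2$ is essentially inessential beyond producing these two properties.
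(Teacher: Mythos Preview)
Your argument is correct and follows essentially the same route as the paper: split on which of $\Gamma_{inc}$, $\Gamma_{out}$ realizes $\Gamma$, observe that the saturated side has all traces on one side of $\sigma$, and combine with flux conservation and $n=m$. The paper minorizes the unsaturated terms by $f(\bar\rho_l)-f(\sigma)$ to obtain $\mathcal F\ge (n-m)f(\sigma)=0$ directly, whereas you carry the sign decomposition one step further to the exact identity $\mathcal F=2\sum_{i:\bar\rho_i<\sigma}(f(\sigma)-f(\bar\rho_i))$ (respectively $2\sum_{j:\bar\rho_j>\sigma}(f(\sigma)-f(\bar\rho_j))$); this is a pleasant sharpening but not a genuinely different method.
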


\begin{proof}
  Fix an initial condition $(\rho_{1,0}, \ldots, \rho_{n+m,0})$ and define
  $(\bar \rho_1, \ldots, \bar \rho_{n+m}) =
  \Rr_2 (\rho_{1,0}, \ldots, \rho_{n+m,0})$. We have two different cases.
  \begin{description}
  \item[$\Gamma_{inc} \le \Gamma_{out}$.] In this situation, we deduce that
    $\bar \rho_i \le \sigma$ for every $i \in \{1, \ldots, n\}$.
    Thus the entropy reads
    \begin{displaymath}
      \mathcal F (\bar \rho_1, \ldots, \bar \rho_{n+m}, \sigma) =
      n f(\sigma) - \sum_{i=1}^n f(\bar \rho_i)
      -\!\! \sum_{j=n+1}^{n+m}\! \sgn (\bar \rho_j - \sigma)
      \left( f(\bar \rho_j) - f(\sigma) \right).
    \end{displaymath}
    For every $j \in \{n+1, \ldots, n+m\}$, the term
    $-\sgn (\bar \rho_j - \sigma) \left( f(\bar \rho_j) - f(\sigma) \right)$
    can be minorized by $f(\bar \rho_j) - f(\sigma)$ and so
    \begin{eqnarray*}
      \mathcal F (\bar \rho_1, \ldots, \bar \rho_{n+m}, \sigma) & \ge &
      n f(\sigma) - \sum_{i=1}^n f(\bar \rho_i)
      + \sum_{j=n+1}^{n+m} \left( f(\bar \rho_j) - f(\sigma) \right)\\
      & = & (n-m) f(\sigma) = 0.
    \end{eqnarray*}

  \item[$\Gamma_{inc} > \Gamma_{out}$.] In this situation, we deduce that
    $\bar \rho_j \ge \sigma$ for every $j \in \{n+1, \ldots, n+m\}$.
    Thus the entropy reads
    \begin{displaymath}
      \mathcal F (\bar \rho_1, \ldots, \bar \rho_{n+m}, \sigma) = \!
      \sum_{i=1}^{n} \sgn (\bar \rho_i - \sigma)
      \left( f(\bar \rho_i) - f(\sigma) \right)
      + m f(\sigma) - \!\! \sum_{j=n+1}^{n+m} f(\bar \rho_j).
    \end{displaymath}
    For every $i \in \{1, \ldots, n\}$, the term
    $\sgn (\bar \rho_i - \sigma) \left( f(\bar \rho_i) - f(\sigma) \right)$
    can be minorized by $f(\bar \rho_i) - f(\sigma)$ and so
    \begin{eqnarray*}
      \mathcal F (\bar \rho_1, \ldots, \bar \rho_{n+m}, \sigma) & \ge &
      \sum_{i=1}^{n} \left( f(\bar \rho_i) - f(\sigma) \right)
      + m f(\sigma) - \sum_{j=n+1}^{n+m} f(\bar \rho_j)\\
      & = & (m - n) f(\sigma) = 0.
    \end{eqnarray*}
  \end{description}
  The proof is finished.
\end{proof}

In general, the Riemann solver $\Rr_2$ does not satisfy the entropy
condition (E1) even in the case $n=m$, as the next Proposition shows.

\begin{prop}
  The Riemann solver $\Rsol_2$ does not satisfy the entropy condition (E1)
  in the sense of Definition~\ref{def:entropy_RS_E1}.
\end{prop}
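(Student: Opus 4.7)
The plan is to exhibit a counterexample. Since Theorem~\ref{thm:n_ne_m} already rules out any Riemann solver satisfying (E1) when $n \ne m$, I restrict attention to $n = m$ and work in the cleanest nontrivial case $n = m = 2$. Here Propositions~\ref{prop:0bad}--\ref{prop:4bad} (collected in Table~\ref{tab:equilibrium}) give sharp orderings that any (E1)-admissible equilibrium must satisfy, so it suffices to produce an initial datum whose $\Rr_2$-image falls outside every admissible configuration.

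The structural feature of $\Rr_2$ that I plan to exploit is the decoupling between its incoming and outgoing sides: the pairs $(\bar\gamma_1, \bar\gamma_2)$ and $(\bar\gamma_3, \bar\gamma_4)$ are produced by two independent orthogonal projections of $\Gamma\boldsymbol{\theta}$ onto convex sets that depend only on one side's data. My idea is to choose bad incoming data whose upper bounds force the set $I$ to collapse to a single point, while simultaneously taking an unbalanced priority vector $\boldsymbol{\theta}$ on the outgoing side that pushes the projection onto the segment $J$ to a vertex. This way $\bar\gamma_3$ and $\bar\gamma_4$ come out asymmetric in a way that the (E1)-orderings cannot accommodate.

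Concretely, I would take $f(\rho) = 4\rho(1-\rho)$, so $\sigma = 1/2$; initial data $\rho_{1,0} = \rho_{2,0} = 1/4$ and $\rho_{3,0} = \rho_{4,0} = 0$; and $\boldsymbol{\theta} = (1/2, 1/2, 1/4, 3/4)$. A direct computation gives $\Omega_1 = \Omega_2 = [0, 3/4]$, $\Omega_3 = \Omega_4 = [0, 1]$, $\Gamma = 3/2$, so $I = \{(3/4, 3/4)\}$ forces $\bar\gamma_1 = \bar\gamma_2 = 3/4$. On the outgoing side the target $(\Gamma\theta_3, \Gamma\theta_4) = (3/8, 9/8)$ lies on the line $\gamma_3 + \gamma_4 = 3/2$ but outside the segment $J$, so its orthogonal projection onto $J$ is the endpoint $(1/2, 1)$; inverting $f$ on the correct branch yields $\bar\rho_1 = \bar\rho_2 = 1/4$, $\bar\rho_3 = (2 - \sqrt{2})/4$, $\bar\rho_4 = 1/2$. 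Both incoming entries of the equilibrium are bad, yet $\bar\rho_3 < 1/4 = \bar\rho_1$, contradicting the ordering $\bar\rho_1 \le \bar\rho_3 \le \bar\rho_4 \le \bar\rho_2$ required by the first item of Proposition~\ref{prop:2bad}, so (E1) cannot hold on this initial datum.

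For a self-contained verification, independent of Proposition~\ref{prop:2bad}, one evaluates $\mathcal F$ at any $k$ with $\bar\rho_3 < k < 1/4$: the sign pattern $\sgn(\bar\rho_l - k) = (+, +, -, +)$ makes $\mathcal F$ collapse to $f(\bar\rho_1) + f(\bar\rho_2) + f(\bar\rho_3) - f(\bar\rho_4) - 2 f(k) = 1 - 2 f(k)$, which is strictly negative since $f$ is increasing on $[0, \sigma]$ and $f(\bar\rho_3) = 1/2$. The only real obstacle is guessing numerically transparent data for which the projection onto $J$ is pushed to a vertex of $J$; once such an example is on the page, every verification step is elementary.
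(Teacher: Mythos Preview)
Your proof is correct and follows essentially the same route as the paper: exhibit a $2\times 2$ equilibrium for $\Rr_2$ with $f(\rho)=4\rho(1-\rho)$ and $\bar\rho_1=\bar\rho_2=\tfrac14$ whose outgoing fluxes are split unequally by an asymmetric $\boldsymbol\theta$, then check directly that the Kru\v{z}kov inequality fails at some $k$ near $\tfrac14$. The only cosmetic difference is that the paper takes $\boldsymbol\theta=(\tfrac12,\tfrac12,\tfrac5{12},\tfrac7{12})$, for which the target point $(\tfrac58,\tfrac78)$ already lies in $J$ (so no genuine projection is needed), and evaluates $\mathcal F$ at the single point $k=\tfrac14$; your choice drives the projection to a vertex of $J$ and you verify the failure both via Proposition~\ref{prop:2bad} and by a direct sign computation on the interval $(\bar\rho_3,\tfrac14)$.
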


\begin{proof}
  Consider a node with $2$ incoming and $2$ outgoing arcs,
  the flux function $f(\rho)=4\rho(1-\rho)$,
  $\boldsymbol\theta=\left(\frac12,\frac12,\frac5{12},\frac7{12}\right)$
  and the equilibrium configuration
  $\left(\frac14,\frac14,\frac12-\frac{\sqrt3}{4\sqrt2},
    \frac12-\frac1{4\sqrt2}\right)$.
  In this case equation~(\ref{eq:entropy_RS_E1}) becomes
  \begin{gather*}
    2\sgn\left(\frac14-k\right)\left(\frac34-f(k)\right)
    -\sgn\left(\frac12-\frac{\sqrt3}{4\sqrt2}-k\right)
    \left(\frac58-f(k)\right)\\-
    \sgn\left(\frac12-\frac1{4\sqrt2}-k\right)
    \left(\frac78-f(k)\right)\ge0
  \end{gather*}
  for every $k\in[0,1]$.
  If $k=\frac14$, then the previous inequality becomes
  \begin{displaymath}
    \left(\frac58-\frac34\right)-\left(\frac78-\frac34\right)\ge0,
  \end{displaymath}
  which is clearly false.
\end{proof}

Indeed, in some special situation, namely for nodes with $2$ incoming
and $2$ outgoing arcs and
$\boldsymbol\theta=\left(\frac12,\frac12,\frac12,\frac12\right)$,
the Riemann solver $\Rr_2$ satisfies the entropy condition (E1).

\begin{prop}
  \label{prop:rs2_E1}
  Fix a node $J$ with two incoming and two outgoing arcs.
  If $\boldsymbol\theta=\left(\frac12,\frac12,\frac12,\frac12\right)$,
  then the Riemann solver $\Rr_2$ satisfies the entropy condition (E1),
  in the sense of Definition~\ref{def:entropy_RS_E1}.
\end{prop}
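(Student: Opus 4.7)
The strategy is to show that for every initial datum, the output $(\bar\rho_1,\bar\rho_2,\bar\rho_3,\bar\rho_4)=\Rr_2(\rho_{1,0},\ldots,\rho_{4,0})$ falls in one of the rows of Table~\ref{tab:equilibrium}. Since Propositions~\ref{prop:0bad}--\ref{prop:4bad} have already shown that every such configuration satisfies $\entropy{\srho,k}$ for every $k\in[0,1]$, this reduces the whole claim to a case analysis on the geometry of the projection that defines $\Rr_2$.

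The first step is to exploit the symmetry $\theta_1=\theta_2=\tfrac12$. Writing $s_l=\sup\Omega_l$, minimizing $(\gamma_1-\Gamma/2)^2+(\gamma_2-\Gamma/2)^2=2(\gamma_1-\Gamma/2)^2$ over $I$ gives the closed form
\begin{equation*}
\bar\gamma_1=\bar\gamma_2=\Gamma/2\ \text{if}\ \tfrac{\Gamma}{2}\le\min(s_1,s_2),\qquad
\bar\gamma_1=s_1,\ \bar\gamma_2=\Gamma-s_1\ \text{if}\ s_1<\tfrac{\Gamma}{2}\le s_2,
\end{equation*}
and the analogous behaviour for the outgoing projection onto $J$. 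The crucial consequence is that the two incoming fluxes are equal whenever possible, and otherwise the smaller one saturates its capacity; the same for the two outgoing fluxes.

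Next I would split on whether $\Gamma=\Gamma_{inc}$ or $\Gamma=\Gamma_{out}$; by the incoming/outgoing duality in the construction of $\Rr_2$ it is enough to treat one. Say $\Gamma=\Gamma_{inc}=s_1+s_2$; then $\bar\gamma_i=s_i$ for $i=1,2$, so each $\bar\rho_i$ equals $\rho_{i,0}$ (when $\rho_{i,0}\le\sigma$, a good datum) or $\sigma$ (when $\rho_{i,0}>\sigma$, a bad datum). In particular $\bar\rho_1,\bar\rho_2\le\sigma$. For the outgoing side, assuming WLOG $s_3\le s_4$, the projection produces either (i) $\bar\gamma_3=\bar\gamma_4=\Gamma/2\le s_3$, or (ii) $\bar\gamma_3=s_3<\Gamma/2\le\bar\gamma_4=\Gamma-s_3\le s_4$. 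In case (i) both outgoing states satisfy $\bar\rho_j\le\sigma$ and are equal, while in case (ii) $\bar\rho_3$ is determined by whether $\rho_{3,0}$ is good or bad and $\bar\rho_4\le\sigma$. Combining these two outgoing patterns with the good/bad classification of $\rho_{1,0},\rho_{2,0}$, I would enumerate finitely many configurations and, after applying the symmetry of (H1), match each to a row of Table~\ref{tab:equilibrium}.

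The main obstacle I expect is the three-bad-data row, which carries the extra flux inequality $\f1\le\max\{\f2,\f3\}$. This has to be read out of the symmetric projection: whichever incoming or outgoing $\bar\gamma$ lies strictly below its capacity is forced to equal $\Gamma/2$ (or the complement of the saturated one), and the structural relation $\bar\gamma_i=s_i$ for saturated arcs lets one bound the flux on the lone good arc by the maximum of the two bad-data fluxes. Once this inequality is checked in the (few) arrangements where three of the four $\bar\rho_l$ are bad, the remaining rows $h\in\{0,1,2,4\}$ follow directly from the order relations produced by the symmetric projection.
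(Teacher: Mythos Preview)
Your approach is essentially the paper's: reduce to the classification in Table~\ref{tab:equilibrium} and invoke Propositions~\ref{prop:0bad}--\ref{prop:4bad}; the paper organizes the case split by the number $g$ of good data in the equilibrium, whereas you split first on $\Gamma=\Gamma_{inc}$ versus $\Gamma=\Gamma_{out}$ and then on the projection pattern, but the enumerations coincide. One caveat: your good/bad labels for incoming arcs are reversed relative to the paper's convention (for incoming arcs a \emph{good} datum satisfies $\bar\rho_i\ge\sigma$, so $\bar\rho_i=\rho_{i,0}<\sigma$ is \emph{bad} and $\bar\rho_i=\sigma$ is \emph{good}); fixing this, your matching to Table~\ref{tab:equilibrium} and the check of the flux inequality in the three-bad-data row go through exactly as in the paper.
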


\begin{proof}
  Consider an equilibrium $(\srho)$ for the Riemann solver $\Rr_2$
  and denote with $g$ the number of good data. We have the following
  possibilities.

  \begin{description}
  \item[$g=4$.] In this case we deduce that
    $(\srho)=\left(\frac12,\frac12,\frac12,\frac12\right)$ and so the
    entropy condition (E1) is satisfied.

  \item[$g=3$.] Consider only the case $\Gamma=\Gamma_{inc}$, since
    the other case $\Gamma=\Gamma_{out}$ is completely symmetric.
    Thus the bad datum is in an incoming arc and so we may assume that
    $\brho_1<\sigma$, $\brho_2\ge\sigma$ and $\brho_3\le\brho_4\le\sigma$.
    Since $\boldsymbol\theta=\left(\frac12,\frac12,\frac12,\frac12\right)$,
    then $\brho_2=\sigma$ and $\brho_3=\brho_4<\sigma$.
    Moreover, the fact that $\f1+\f2=\f3+\f4$ implies that
    \begin{displaymath}
      \brho_1<\brho_3=\brho_4<\brho_2=\sigma.
    \end{displaymath}
    By item~1 of Proposition~\ref{prop:1bad}, the entropy condition (E1) holds.
 
  \item[$g=2$.] Consider only the case $\Gamma=\Gamma_{inc}$, since
    the other case $\Gamma=\Gamma_{out}$ is completely symmetric.
    We have two possibilities: either the bad data are in the incoming arcs
    or one bad datum is in an incoming arc and the other bad datum is in
    an outgoing arc.\\
    Assume first that the bad data are in the incoming arcs.
    Without loss of generality
    we may assume that $\brho_1\le \brho_2<\sigma$ and
    $\brho_3\le\brho_4\le\sigma$.
    Since $\boldsymbol\theta=\left(\frac12,\frac12,\frac12,\frac12\right)$,
    then $\brho_3=\brho_4$ and so,
    the fact that $\f1+\f2=\f3+\f4$ implies that
    \begin{displaymath}
      \brho_1\le\brho_3=\brho_4\le\brho_2<\sigma.
    \end{displaymath}
    By item~1 of Proposition~\ref{prop:2bad}, the entropy condition (E1)
    is satisfied.

    Assume now that one bad datum is in an incoming arc
    and the other bad datum is in an outgoing arc. Without loss of generality
    we may assume that $\brho_1<\sigma<\brho_4$ and
    $\brho_3\le\sigma\le\brho_2$.
    Since $\Gamma=\Gamma_{inc}$, then we deduce that $\brho_2=\sigma$.
    Moreover $\boldsymbol\theta=\left(\frac12,\frac12,\frac12,\frac12\right)$
    implies that $\f3\ge\f4$ and so $\f1\le\f4$, since
    $\f1+\f2=\f3+\f4$. Therefore
    \begin{displaymath}
      \brho_1\le\brho_3\le\brho_2=\sigma<\brho_4\quad\textrm{ and }\quad
      \brho_1<\brho_2.
    \end{displaymath}
    By item~3 of Proposition~\ref{prop:2bad}, the entropy condition (E1)
    is satisfied.

  \item[$g=1$.] Consider only the case $\Gamma=\Gamma_{inc}$, since
    the other case $\Gamma=\Gamma_{out}$ is completely symmetric.
    We have two possibilities: the good datum is in an incoming arc or
    in an outgoing arc.
    Assume first that the good datum is in an incoming arc.
    Without loss of generality, we may consider that
    $\brho_1<\sigma\le\brho_2$ and $\sigma<\brho_3\le \brho_4$.
    Since $\Gamma=\Gamma_{inc}$, then $\brho_2=\sigma$. Moreover
    $\f1+\f2=\f3+\f4$ implies that $\f4\ge \f1$.
    By item~1 of Proposition~\ref{prop:3bad}, the entropy condition (E1)
    is satisfied.

    Assume now that the good datum is in an outgoing arc.
    Without loss of generality, suppose that $\brho_1\le\brho_2<\sigma$,
    $\brho_3\le\sigma<\brho_4$.
    Since $\boldsymbol\theta=\left(\frac12,\frac12,\frac12,\frac12\right)$,
    then $\f3\ge\f4$ and so $\f4\le\f2$ and $\brho_3\ge\brho_1$ since
    $\f1+\f2=\f3+\f4$.
    By item~2 of Proposition~\ref{prop:3bad}, the entropy condition (E1)
    is satisfied.

  \item[$g=0$.] In this case we have that $\Gamma=\Gamma_{inc}=\Gamma_{out}$.
    Without loss of generality, suppose that
    $\brho_1\le\brho_2<\sigma<\brho_3\le\brho_4$ and we conclude by
    Proposition~\ref{prop:4bad}.
  \end{description}

  The proof is finished.
\end{proof}

%
%
\subsection{Riemann Solver $\Rsol_3$}
In this subsection, we consider the Riemann solver, introduced
in \cite{marigo-piccoli_2008_T_junction} for crossing nodes.
Consider a node $J$ with $n$ incoming and $m=n$ outgoing arcs
and fix a positive coefficient $\Gamma_J$, which is the maximum capacity
of the node.
The construction can be done in the following way.
  \begin{enumerate}
  \item Fix $\boldsymbol{\theta} \in \Theta$.
    For every $i\in\{1,\ldots,n\}$, define 
    \begin{equation*}
      \Gamma_i = \min\left\{\sup\Omega_i,\sup\Omega_{i+n}\right\}.
    \end{equation*}
    Then the  maximal possible through-flow at $J$ is
    \begin{equation*}
      \Gamma = \sum_{i=1}^n \Gamma_i.
    \end{equation*}
  
  \item Introduce the closed, convex and not empty set
    \begin{equation*}
      I=\left\{(\gamma_1,\ldots,\gamma_n) \in \prod_{i=1}^n [0,\Gamma_i] 
        \colon\sum_{i=1}^n\gamma_i=\min\left\{\Gamma,\Gamma_J\right\}\right\}.
    \end{equation*}
    
  \item Denote with $(\bar\gamma_1,\ldots,\bar\gamma_n)$ the orthogonal
    projection on the convex set $I$ of the point
    $(\min \{\Gamma, \Gamma_J\} \theta_1,\ldots,
    \min \{\Gamma, \Gamma_J\} \theta_n)$ and set
    $ (\bar\gamma_{n+1},\ldots,\bar\gamma_{2n})
    =(\bar\gamma_1,\ldots,\bar\gamma_n)$.

  \item For every $i\in\{1,\ldots,n\}$, define $\bar\rho_i$
    either by $\rho_{i,0}$
    if $f(\rho_{i,0})=\bar\gamma_i$, or by the solution to
    $f(\rho)=\bar\gamma_i$
    such that $\bar\rho_i\ge\sigma$.
    For every $j\in\{n+1,\ldots,n+m\}$,
    define $\bar\rho_j$ either by $\rho_{j,0}$
    if $f(\rho_{j,0})=\bar\gamma_j$, or by the solution
    to $f(\rho)=\bar\gamma_j$
    such that $\bar\rho_j\le\sigma$.
    Finally, define $\Rr_3:[0,1]^{n+m}\to[0,1]^{n+m}$ by
  \begin{equation}\label{rs3_rho}
    \Rr_3(\rho_{1,0},\ldots,\rho_{n+m,0})
    =(\bar\rho_1,\ldots,\bar\rho_n,\bar\rho_{n+1},\ldots,\bar\rho_{n+m})\,.
  \end{equation}
\end{enumerate}

The following result holds.

\begin{lemma}
  The function defined in~(\ref{rs3_rho}) satisfies the consistency
  condition
  \begin{equation}\label{eq:stab_rs3}
    \Rr_3(\Rr_3(\rho_{1,0},\ldots,\rho_{n+m,0}))=
    \Rr_3(\rho_{1,0},\ldots,\rho_{n+m,0})
  \end{equation}
  for every $(\rho_{1,0},\ldots,\rho_{n+m,0})\in[0,1]^{n+m}$.
\end{lemma}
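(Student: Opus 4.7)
The plan is to show that applying $\Rr_3$ a second time to the output of the first application returns the same point. Write $(\bar\rho_1,\ldots,\bar\rho_{n+m})=\Rr_3(\rho_{1,0},\ldots,\rho_{n+m,0})$ and set $\bar\gamma_l=f(\bar\rho_l)$, so that $\bar\gamma_{n+i}=\bar\gamma_i$, and decorate with primes the quantities that appear when the algorithm is rerun with initial datum $(\bar\rho_1,\ldots,\bar\rho_{n+m})$. It suffices to prove $\bar{\bar\gamma}_l=\bar\gamma_l$, since the reconstruction step in the definition of $\Rr_3$ will then automatically pick $\bar{\bar\rho}_l=\bar\rho_l$, the branch $f(\rho_l)=\bar{\bar\gamma}_l$ being realised by $\rho_l=\bar\rho_l$.

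First I would recompute the sets $\Omega_l'$ directly from~\eqref{eq:omega_i}--\eqref{eq:omega_j} with initial datum $\bar\rho_l$: one gets $\sup\Omega_i'=\bar\gamma_i$ if $\bar\rho_i\le\sigma$ and $\sup\Omega_i'=f(\sigma)$ otherwise, and symmetrically for $\sup\Omega_{n+i}'$. Therefore $\Gamma_i'\ge\bar\gamma_i$, with strict inequality only when $\bar\rho_i>\sigma$ and $\bar\rho_{n+i}<\sigma$ hold simultaneously. Inspecting how $\bar\rho_l$ is reconstructed from $\bar\gamma_l$, this joint strictness forces $\bar\gamma_i<\sup\Omega_i$ and $\bar\gamma_i<\sup\Omega_{n+i}$, hence $\bar\gamma_i<\Gamma_i$. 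Combining this with $\sum_i\bar\gamma_i=\min\{\Gamma,\Gamma_J\}$, the existence of such a strict index is incompatible with $\Gamma\le\Gamma_J$: so either every $\Gamma_i'=\bar\gamma_i$ and $\Gamma'=\sum_i\bar\gamma_i\le\Gamma_J$, or $\Gamma_J<\Gamma\le\Gamma'$. In both regimes $\min\{\Gamma',\Gamma_J\}=\sum_i\bar\gamma_i$, and so $(\bar\gamma_1,\ldots,\bar\gamma_n)\in I'$.

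It remains to check that $(\bar\gamma_i)_i$ is the orthogonal projection of the point $(\min\{\Gamma',\Gamma_J\}\theta_i)_i$ onto $I'$. I would invoke the standard parametric description of projections onto a box-simplex $\{x:x_i\in[0,U_i],\,\sum_i x_i=S\}$: one has $x_i=\max\{0,\min\{U_i,p_i-\lambda\}\}$ for the unique scalar $\lambda$ that fixes the sum. Because $p_i=S\theta_i$ and $S$ are the same in the two applications, the question reduces to whether the multiplier $\lambda_0$ that delivers the original projection still delivers $\bar\gamma_i$ when the bounds $\Gamma_i$ are replaced by $\Gamma_i'$. A short case analysis on the value of $p_i-\lambda_0$ settles this: the clipping at $0$ or in the open interval $(0,\Gamma_i)$ is preserved because $\Gamma_i'\ge\bar\gamma_i$, while the case $p_i-\lambda_0\ge\Gamma_i$ can occur only when $\bar\gamma_i=\Gamma_i$, which by the previous paragraph excludes the strict case and forces $\Gamma_i'=\Gamma_i$, so the clipped value is unchanged. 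The chief subtlety, and the main obstacle, is precisely this last point: $\Gamma_i'$ can strictly exceed $\Gamma_i$, but exactly when the upper bound is strictly inactive at the original projection, so enlarging the box at that index does not perturb the projected value.
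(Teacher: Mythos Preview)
Your argument is correct. The paper itself does not give a proof of this lemma: it merely refers the reader to Proposition~2.4 of \cite{marigo-piccoli_2008_T_junction}. Your proposal is therefore a genuine, self-contained verification rather than a reproduction of something in the present paper.

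A couple of small remarks on the write-up. The inequality $\Gamma\le\Gamma'$ that you state in the second regime is in fact always true (not only in that regime): by the reconstruction rule, $\bar\rho_i\le\sigma$ forces $\bar\rho_i=\rho_{i,0}$ and hence $\sup\Omega_i'=\sup\Omega_i$, while $\bar\rho_i>\sigma$ gives $\sup\Omega_i'=f(\sigma)\ge\sup\Omega_i$; the outgoing side is symmetric, so $\Gamma_i'\ge\Gamma_i$ for every $i$. This slightly streamlines your dichotomy but does not affect the conclusion $\min\{\Gamma',\Gamma_J\}=\sum_i\bar\gamma_i$. In the projection step, rather than speaking of ``the unique $\lambda$'', it is cleaner to say that the same multiplier $\lambda_0$ makes $(\bar\gamma_i)_i$ satisfy the KKT conditions for the new box-simplex $I'$; uniqueness of the Euclidean projection onto a closed convex set then finishes the argument without any appeal to uniqueness of $\lambda$. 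With these cosmetic adjustments the proof is complete.
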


For a proof, see Proposition 2.4 of \cite{marigo-piccoli_2008_T_junction}.

\begin{example}
  Consider a node $J$ with $2$ incoming arcs and $2$ outgoing ones,
  $\boldsymbol{\theta} = \left(\frac34, \frac14, \frac34, \frac14 \right)$
  and $\Gamma_J = \frac{64}{75}$.
  Moreover, assume that $f(\rho) = 4 \rho (1-\rho)$.\\
  We easily see that 
  \begin{displaymath}
    (\bar \rho_{1}, \bar \rho_{2}, \bar \rho_{3}, \bar \rho_{4}) =
    \left( \frac15,\, \frac12 + \frac1{10} \sqrt{\frac{59}3},\,
      \frac45,\, \frac12 - \frac1{10} \sqrt{\frac{59}3} \right)
  \end{displaymath}
  is an equilibrium for $\Rr_3$. Thus we have
  \begin{eqnarray*}
    \mathcal F (\bar \rho_{1}, \bar \rho_{2}, \bar \rho_{3}, \bar \rho_{4},
    \sigma ) & = & \left( f(\sigma) - f(\bar \rho_1) \right)
    + \left( f(\bar \rho_2) - f(\sigma) \right)\\
    & & - \left( f(\bar \rho_3) - f(\sigma)\right)
    - \left( f(\sigma) - f(\bar \rho_4) \right)\\
    & = & - \frac{64}{75}.
  \end{eqnarray*}
\end{example}

\begin{example}
  Consider a node $J$ with $2$ incoming arcs and $2$ outgoing ones,
  $\boldsymbol{\theta} = \left(\frac12, \frac12, \frac12, \frac12 \right)$
  and $\Gamma_J = \frac{7}{6}$.
  Moreover, assume that $f(\rho) = 4 \rho (1-\rho)$.\\
  We easily see that 
  \begin{displaymath}
    (\bar \rho_{1}, \bar \rho_{2}, \bar \rho_{3}, \bar \rho_{4}) =
    \left( \frac12 + \frac1{2} \sqrt{\frac{1}2}, \, 
      \frac12 + \frac1{2} \sqrt{\frac{1}3},\,
      \frac12 + \frac1{2} \sqrt{\frac{1}2},\,
      \frac12 - \frac1{2} \sqrt{\frac{1}3} \right)
  \end{displaymath}
  is an equilibrium for $\Rr_3$. Thus we have
  \begin{eqnarray*}
    \mathcal F (\bar \rho_{1}, \bar \rho_{2}, \bar \rho_{3}, \bar \rho_{4},
    \sigma ) & = & \left( f(\bar \rho_1) - f(\sigma) \right)
    + \left( f(\bar \rho_2) - f(\sigma) \right)\\
    & & - \left( f(\bar \rho_3) - f(\sigma)\right)
    - \left( f(\sigma) - f(\bar \rho_4) \right)\\
    & = & 2 \left( f(\bar \rho_2) - f(\sigma) \right) = - \frac{2}{3}.
  \end{eqnarray*}
\end{example}

The following result follows by the previous examples.

\begin{prop}
  The Riemann solver $\Rr_3$ does not satisfy neither the entropy
  condition (E1) nor the entropy condition (E2).
\end{prop}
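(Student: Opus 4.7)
The plan is to read the conclusion directly off the two examples that immediately precede the statement, so the proof reduces to a short, essentially bookkeeping argument.

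First I would treat (E2). Each of the two examples exhibits a node with $n=m=2$, the flux $f(\rho)=4\rho(1-\rho)$, and an explicit choice of $\boldsymbol\theta$ and $\Gamma_J$, together with an equilibrium $(\bar\rho_1,\bar\rho_2,\bar\rho_3,\bar\rho_4)$ of $\Rr_3$ for which $\mathcal F(\bar\rho_1,\bar\rho_2,\bar\rho_3,\bar\rho_4,\sigma)$ has been computed to be $-\tfrac{64}{75}$ and $-\tfrac{2}{3}$ respectively, both strictly negative. To turn this into a violation of Definition~\ref{def:entropy_RS_E2}, I invoke the consistency lemma for $\Rr_3$: choosing as initial datum the equilibrium itself, $(\rho_{1,0},\ldots,\rho_{4,0})=(\bar\rho_1,\ldots,\bar\rho_4)$, gives $\Rr_3(\rho_{1,0},\ldots,\rho_{4,0})=(\bar\rho_1,\ldots,\bar\rho_4)$, so (E2) demands $\mathcal F(\bar\rho_1,\bar\rho_2,\bar\rho_3,\bar\rho_4,\sigma)\ge 0$, which is contradicted by either computation.

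Next I would deduce the failure of (E1) at no extra cost. Definition~\ref{def:entropy_RS_E1} requires $\mathcal F(\bar\rho_1,\ldots,\bar\rho_{n+m},k)\ge 0$ for \emph{every} $k\in[0,1]$; specializing to $k=\sigma$ recovers exactly the (E2) inequality. Hence the same initial data produced above already witnesses the failure of (E1).

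There is no genuine obstacle here: the nontrivial computational content—producing explicit equilibria and evaluating $\mathcal F(\,\cdot\,,\sigma)$—is already carried out in the two preceding examples, and the remaining argument is just the two observations that (i) consistency of $\Rr_3$ lets me realize any equilibrium as $\Rr_3$ of some initial condition, and (ii) (E1) implies (E2) by taking $k=\sigma$.
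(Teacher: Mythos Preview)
Your proposal is correct and matches the paper's approach, which simply states that the result follows from the two preceding examples. One minor quibble: you do not actually need the consistency lemma to conclude $\Rr_3(\bar\rho_1,\ldots,\bar\rho_4)=(\bar\rho_1,\ldots,\bar\rho_4)$; this is immediate from the definition of equilibrium (Definition~\ref{def:equilibrium}), since the examples explicitly assert that the displayed tuples are equilibria for $\Rr_3$.
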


%
%
%
%

{\small{ \bibliographystyle{abbrv}
    
    \bibliography{1.bib} }}

\end{document}